\newcommand{\semicircle}[3]{\draw[thick] (#1+#3,#2) arc [start angle=0, end angle=180, radius=#3];}
\newtheorem{thm}{Theorem}
\theoremstyle{definition}
\newtheorem{lem}[thm]{Lemma}
\newtheorem{prop}[thm]{Proposition}
\newtheorem{cor}[thm]{Corollary}
\newtheorem{eg}[thm]{Example}
\theoremstyle{remark}
\newtheorem*{claim}{Claim}
\newcommand{\intersect}{\cap}
\newcommand{\N}{\mathbb{N}}
\newcommand{\R}{\mathbb{R}}
\newcommand{\move}{\text{Move}}
\newcommand{\cl}[1]{\overline{#1}}
\begin{document}
\title{Minimum Topological Group Topologies\footnote{\noindent {\bf 2010
      Mathematics Subject Classification}: 20B27, 22A05, 22F50,  54F05, 54H15, 57S05 
\vskip .1em {\bf Key Words and Phrases}: minimum group topology, minimal group topology, homeomorphism group, automorphism group}}

\author{Xiao Chang and Paul Gartside}
\date{October 2015}

\maketitle

\begin{abstract}
A Hausdorff topological group topology on a group $G$ is the minimum (Hausdorff) group topology if it is contained in every Hausdorff group topology on $G$. For every compact metrizable space $X$ containing an open $n$-cell, $n\ge2$, the homeomorphism group $H(X)$ has no minimum  Hausdorff group topology. The homeomorphism groups of the Cantor set and the Hilbert cube have no minimum group topology. For every compact metrizable space $X$ containing a dense open one-manifold, $H(X)$ has the minimum group topology. Some, but not all, oligomorphic groups have the minimum group topology. 
\end{abstract}

\section{Introduction}
A topology $\tau$ on a group $G$ is a \emph{topological group topology} if the group operations on $G$ are $\tau$-continuous. The collection of all topological group topologies on a group $G$ is partially ordered by set-inclusion, with the discrete topology as the maximum element, and the indiscrete topology the minimum. The collection of all \emph{Hausdorff} topological group topologies on $G$ is then a sub partial order of all topological group topologies. This sub order always has a maximum element (the discrete topology) but may or may not have a minimum element (a Hausdorff group topology on $G$ contained in all other Hausdorff group topologies), and also may or may not have minimal elements (a Hausdorff group topology such that no strictly coarser group topology is Hausdorff). Note that the minimum Hausdorff group topology (if it exists) is certainly minimal, but the converse is false in general. 

Minimal (Hausdorff) group topologies have been extensively studied, see the survey \cite{DM} for example. Minimum (Hausdorff) group topologies seem less well understood. Gaughan \cite{Gau} showed that the group $S(X)$ of all permutations of a set $X$ has the minimum group topology,  namely the topology of pointwise convergence. Later the second author and Glyn \cite{GaGl} showed that the group $H(X)$ of all (auto)homeomorphisms of a space $X$, where $X$ is any metrizable one-manifold (with or without boundary), has the minimum group topology. In this case the minimum group topology is the usual compact-open topology on $H(X)$. Recently Megrelishvili and Polev \cite{MP} have extended this last theorem to $H(X)$ for many compact linearly ordered spaces  $X$. 

We continue this line of investigation into the existence, or otherwise, of minimum Hausdorff group topologies. A number of questions raised in \cite{DM} are answered (notably, Questions~2.3 and~4.28), as are Questions~5.1, 5.2(1) and~5.3(1) from \cite{MP}.

First it is established that many homeomorphism groups do not have a minimum Hausdorff group topology (Theorem~\ref{nomin}). In particular for every compact metrizable space $X$ containing an open $n$-cell, $n\ge2$, the homeomorphism group $H(X)$ has no minimum  Hausdorff group topology. Nor does the homeomorphism group of the Cantor set or the Hilbert cube. 

We then show how, in certain circumstances, it is possible to `shrink' the compact-open topology $\tau_k$ on a homeomorphism group $H(X)$ around a closed subset $C$ of $X$ to obtain a new (Hausdorff) group topology $\tau_{k|C}$ on $H(X)$ (Theorem~\ref{thm:taukC}). This shrinking process is derived from an idea of Gamarnik \cite{Gam}. It follows, for example, that the compact-open topology on the homeomorphism group of the Mobius band is not minimal. 

For the purposes of this paper, however,  the real merit of the $\tau_{k|C}$ topology is that when a compact metrizable space $X$ contains a dense open one-manifold then $H(X)$ has the minimum group topology, and that topology is $\tau_{k|C}$, where $C$ is the complement of the one-manifold part of $X$ (Theorem~\ref{thm:taukc_min}). A wide range of spaces satisfy the hypothesis of this theorem. We investigate for these spaces when $\tau_{k|C}$ is equal to $\tau_k$ and when it is strictly smaller. In both cases we give sufficient conditions (see Propositions~\ref{suff_tkcnetk} and~\ref{suff_tkceqtk}, and the accompanying  examples). However a complete classification seems elusive, and we present examples demonstrating the difficulties. Note that when $\tau_{k|C}$ is strictly smaller it follows that the compact-open topology, $\tau_k$ is \emph{not minimal}, but when they are equal we deduce that the compact-open topology is \emph{minimal}. Consequently we derive large classes of compact metric spaces for which we know whether or not the compact-open topology is a minimal  group topology on the homeomorphism group.  

Finally we turn from homeomorphism groups to automorphism groups. For a countable model $M$ of a first order theory, $\mathop{Aut}(M)$ is the group of automorphisms of $M$, and we can topologize it as a subgroup of $S(M)$ with the topology of pointwise convergence. We show that certain oligomorphic automorphism groups, including all strongly homogeneous automorphism groups, have the minimum (Hausdorff) group topology (Theorem~\ref{hom_min}). In most cases this minimum group topology is strictly coarser than the topology of pointwise convergence, and so this latter topology is not minimal. But we also show that the automorphism group of the atomless countable Boolean algebra has no minimum group topology.  

\section{Homeomorphism Groups with no Minimum}

Let $(X,d)$ be a compact metric space. Recall that basic neighborhoods in $H(X)$ of the identity in the pointwise topology have the form 
$B_F^{\tau_p}=\{f \in H(X) : f(x)=x$ for all $x \in F\}$, for some finite subset $F$ of $X$. While basic neighborhoods in $H(X)$ of the identity in the compact-open topology have the form  $B_\epsilon^{\tau_k}=\{f \in H(X) : d(f(x),x))<\epsilon$ for all $x \in X\}$, for some $\epsilon >0$.
For any subset $S$ of $X$ let $H(X|S)=\{ h \in H(X) : h$ is the identity outside $S\}$.

\begin{thm}\label{nomin}
Let $(X,d)$ be a compact metric space. Let $T$ be a non-empty open subset of $X$ containing no isolated points. Suppose $H(X|T)$ has the following two properties:

(A$_T$) for every $\epsilon>0$ and $x_1,y_1,x_2,y_2,\dots,x_n,y_n$ distinct points in $T$ such that $d(x_i,y_i)<\epsilon$ for $i=1,\dots n$,  there exists $h\in B_\epsilon^{\tau_k} \cap H(X|T)$ such that $h(x_i)=y_i$ for $i=1,\dots,n$;

(B$_T$) for every finite subset $F$ of $T$, the set $B_F^{\tau_p} \cap H(X|T)$ is highly transitive on $T \setminus F$.

Then if $\tau$ is a topological group topology on $H(X)$,  $\tau \subseteq \tau_p \cap \tau_k$ and $1 \in U \in \tau$ then $\{1\} \ne H(X|T) \subseteq U$. 
Hence, $H(X)$ does not have a minimum Hausdorff group topology.
\end{thm}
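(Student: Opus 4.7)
The plan has three parts: (i) $H(X|T)\neq\{1\}$; (ii) for every $\tau$-open neighborhood $U$ of $1$, $H(X|T)\subseteq U$; (iii) no minimum Hausdorff group topology exists on $H(X)$. Claim (i) follows quickly from (A$_T$): since $T$ is open with no isolated points there are distinct $x_0,x_1\in T$ with $d(x_0,x_1)<\epsilon$ for some $\epsilon>0$, and (A$_T$) applied to the single pair $(x_0,x_1)$ produces an $h\in B_\epsilon^{\tau_k}\cap H(X|T)$ with $h(x_0)=x_1\neq x_0$.

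For (ii), fix $h_0\in H(X|T)$ and a $\tau$-open $U\ni 1$; by continuity of the group operations, choose a symmetric $V\in\tau$ with $V\cdot V\cdot V\subseteq U$. Since $\tau\subseteq\tau_p\cap\tau_k$, $V$ is simultaneously $\tau_p$-open and $\tau_k$-open, so there exist a finite $F\subseteq X$ and $\epsilon>0$ with $B_F^{\tau_p}\subseteq V$ and $B_\epsilon^{\tau_k}\subseteq V$. The main step is then a factorization $h_0=\alpha\beta\gamma$ with $\alpha,\gamma\in B_F^{\tau_p}\cap H(X|T)$ and $\beta\in B_\epsilon^{\tau_k}\cap H(X|T)$; since each factor lies in $V$, this places $h_0\in V^3\subseteq U$. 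The construction uses (B$_T$) to pick $\alpha,\gamma$, which fix $F$ but act with high transitivity on $T\setminus F$, so that the combined map $\alpha^{-1}\circ h_0\circ\gamma^{-1}$ moves every point of $X$ by less than $\epsilon$; (A$_T$) then realizes this map as the required $\beta$. Concretely, for each $x\in F\cap T$ with $h_0(x)\notin F$, (B$_T$) allows $\alpha$ to send a chosen auxiliary point $p_x\in T\setminus F$ within $\epsilon$ of $x$ to $h_0(x)$, so that $\alpha^{-1}(h_0(x))=p_x$ lies within $\epsilon$ of $x$; $\gamma$ is constructed symmetrically.

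For (iii), any minimum Hausdorff group topology $\tau_{\min}$ on $H(X)$ must lie below both $\tau_p$ and $\tau_k$ (as both are Hausdorff group topologies), hence below $\tau_p\cap\tau_k$. By (ii), every $\tau_{\min}$-neighborhood of $1$ contains the nontrivial $H(X|T)$, so by (i) some element $h\neq 1$ of $H(X|T)$ lies in every $\tau_{\min}$-neighborhood of $1$, contradicting the Hausdorffness of $\tau_{\min}$.

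The principal obstacle is the factorization step in (ii). The construction above handles cleanly the case in which $h_0$ sends $F\cap T$ into $T\setminus F$, but it fails when $h_0$ carries some $x\in F$ to another point of $F$ at distance $\geq\epsilon$: since $\alpha$ is forced to fix $F$ pointwise, $\alpha^{-1}(h_0(x))=h_0(x)$, which is not within $\epsilon$ of $x$, and the three-factor decomposition as stated breaks down. Resolving this case requires either allowing additional factors (harmless because $B_F^{\tau_p}\cap H(X|T)$ is a subgroup, so any number of consecutive $B_F^{\tau_p}$-factors coalesce into a single element, keeping the product inside a suitable power of $V$), or an iterative construction alternating (A$_T$)-type small perturbations with (B$_T$)-type rearrangements to untangle $h_0$'s action on the orbit of $F$ step by step.
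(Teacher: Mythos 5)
Your overall skeleton (nontriviality of $H(X|T)$, absorbing $H(X|T)$ into every $\tau$-neighborhood of $1$, then deducing that any group topology below $\tau_p\cap\tau_k$ is non-Hausdorff, hence no minimum exists) matches the paper's. But the heart of the theorem is the factorization step, and there your argument has a genuine gap --- one you flag yourself. You want $h_0=\alpha\beta\gamma$ with $\alpha,\gamma\in B_F^{\tau_p}\cap H(X|T)$ and $\beta=\alpha^{-1}h_0\gamma^{-1}\in B_\epsilon^{\tau_k}$. The hypotheses (A$_T$) and (B$_T$) only let you \emph{build} homeomorphisms with prescribed values at finitely many points; they give no control over a map such as $\alpha^{-1}h_0\gamma^{-1}$ at the remaining (typically uncountably many) points of $X$. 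Membership in $B_\epsilon^{\tau_k}$ is a global condition: if $h_0$ displaces a whole open subset of $T$ by a large amount, choosing $\alpha,\gamma$ to fix $F$ and match finitely many auxiliary points does nothing to make the composite uniformly $\epsilon$-close to the identity, and (A$_T$) cannot ``realize this map as the required $\beta$'' --- it produces \emph{some} new small homeomorphism with prescribed finite data, it does not certify that a given homeomorphism is small. The further case you discuss ($h_0$ mapping a point of $F$ to another point of $F$ far away) and the proposed ``iterative construction'' are left unexecuted, so the decomposition --- which is the entire content of the theorem --- is not established.

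The paper sidesteps this difficulty by never requiring the leftover factor to be $\tau_k$-small. It writes $h=h_4h_3h_2h_1$ with $h_1,h_3\in B_\epsilon^{\tau_k}\cap H(X|T)$ supplied by (A$_T$), $h_2\in B_F^{\tau_p}\cap H(X|T)$ supplied by (B$_T$), and $h_4:=h(h_3h_2h_1)^{-1}$ defined as whatever remains. The only property needed of $h_4$ is that it lies in $B_F^{\tau_p}$, i.e.\ fixes the finitely many points of $F$ --- a finite condition arranged by choosing $h_1,h_2,h_3$ so that $h_3h_2h_1$ carries $h^{-1}(x_i)$ back to $x_i$ for each $x_i\in F\cap T$ (with $\epsilon$ shrunk below the minimum distance between points of $F\cup h^{-1}(F)$ to keep the relevant points distinct; points of $F\setminus T$ are automatically fixed since $h_4\in H(X|T)$). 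A symmetric $U'$ with $(U')^4\subseteq U$ then finishes. If you rework your step (ii) along these lines --- controlling only the finite set $F$ and letting a fourth factor absorb everything else --- the proof goes through; as written it does not.
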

\begin{proof} Let $\tau$ be a topological group topology on $H(X)$. 
Take any $U$ in $\tau$ containing $1$ and any $h$ in $H(X|T)$. We will show $h\in U$. Clearly by either condition (A$_T$) or (B$_T$) we have many $1 \ne h \in H(X|T)$. Hence no $\tau$-open set separates $1$ from any such $h$,  so $\tau$ is not Hausdorff, and $H(X)$ does not have a minimum Hausdorff group topology. 

As $\tau$ is a group topology, we can find $U'$ in $\tau$ such that $U'$ is symmetric  and $1 \in U' \subseteq (U')^4 \subseteq U$.

 As $U'$ in $\tau$ and $\tau \subseteq \tau_p$, there is a finite subset $F$ of $X$ such that $B_F^{\tau_p} \subseteq U'$. Let $F'=F \cap T$ and enumerate $F'= \{x_1, \ldots, x_n\}$. As $U'$ in $\tau$ and $\tau \subseteq \tau_k$, there is an $\epsilon>0$ such that $B_\epsilon^{\tau_k} \subseteq U'$. We can suppose that $\epsilon < \min \{ d(x,x') : x \ne x', \, x,x' \in F \cup h^{-1}(F)\}$.

Pick $h_1$ in $H(X|T)$ such that: (1) $h_1 \in B_\epsilon^{\tau_k}$ and (2) for $i=1, \ldots , n$ the point $y_i=h_1(h^{-1}(x_i))$ is in $T$ but not in $F$. That $h_1$ exists follows from (A$_T$).

Pick $h_2$ in $H(X|T)$ such that: (1) $h_2 \in B_F^{\tau_p}$ and (2) for $i=1, \ldots , n$ the point $z_i=h_2(y_i)$ has $d(z_i,x_i)<\epsilon$. As $\{y_1, \ldots, y_n\} \cap F=\emptyset$, existence of $h_2$ is guaranteed by (B$_T$).

Pick $h_3$ in $H(X|T)$ such that: (1) $h_3 \in B_\epsilon^{\tau_k}$ and (2) $h(z_i)=x_i$ for $i=1, \ldots n$. As $d(z_i,x_i)<\epsilon$ for all $i$, the existence of $h_3$ is given by (A$_T$).

Let $h_4=h (h_3h_2h_1)^{-1}$. Then evidently $h=h_4h_3h_2h_1$. By construction $h_1,h_2,h_3$ are in $U'$. It remains to show that $h_4$ is in $B_F^{\tau_p}$ (which is contained in $U'$), for then $h \in (U')^4 \subseteq U$.

As $h, h_3, h_2$ and $h_1$ are in $H(X|T)$, so is $h_4$. Thus $h_4$ is in $B_F^{\tau_p}$ if and only if $h_4(x_i)=x_i$ for $i=1,\ldots, n$, and this occurs if and only if $h_4^{-1}(x_i)=x_i$ for $i=1, \ldots , n$. Fix an $i$. Then: $h_4^{-1}(x_i)=(h_3h_2h_1)h^{-1} (x_i) = (h_3h_2) (h_1 (h^{-1}(x_i))) = h_3 (h_2(y_i)) = h_3(z_i)=x_i$.
\end{proof}

In some cases we can apply the preceding theorem with $T=X$ and deduce:
\begin{cor}\label{nomin_MCH}
For the following spaces $X$ the only group topology contained in both the topology of pointwise convergence and the compact-open topology is the indiscrete topology. In particular, $H(X)$ does not have a minimum Hausdorff group topology. 

(a) Every compact manifold (without boundary) of dimension at least $2$,

(b) the Cantor set, and

(c) the Hilbert cube.
\end{cor}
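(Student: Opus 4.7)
The plan is to apply Theorem~\ref{nomin} in each of the three cases with $T=X$. In all three, $X$ has no isolated points, and trivially $H(X|X)=H(X)$, so only conditions (A$_X$) and (B$_X$) remain to be checked. The general strategy is the same in each case: (B$_X$) follows from a classical high-transitivity / homogeneity result for $H(X)$, and (A$_X$) follows from a small-support version of the same result, where the support is a disjoint union of sets of diameter less than $\epsilon$.

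For part (a), a compact manifold $X$ of dimension $n\ge 2$ without boundary: (B$_X$) is the classical fact that for any finite $F\subseteq X$, the pointwise stabilizer of $F$ in $H(X)$ acts highly transitively on $X\setminus F$, proved by joining tuples along arcs in $X\setminus F$ and appealing to isotopy extension; path-connectedness of the complement of a finite set needs $n\ge 2$. For (A$_X$), given distinct $x_1,y_1,\dots,x_n,y_n$ with $d(x_i,y_i)<\epsilon$, I would use the same isotopy-extension machinery but with each arc from $x_i$ to $y_i$ chosen inside an open ball $B_i$ of diameter less than $\epsilon$, and with the $B_i$ arranged pairwise disjoint (again using $n\ge 2$). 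The resulting homeomorphism is supported on $\bigcup B_i$, so $d(h(x),x)<\epsilon$ for all $x$.

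For part (b), the Cantor set $C$: strong homogeneity of $C$ gives (B$_X$) immediately, since $C\setminus F$ is homeomorphic to $C$ and any bijection between finite subsets of $C\setminus F$ extends to an autohomeomorphism fixing $F$ pointwise. For (A$_X$), the plan is to pick pairwise disjoint clopen neighborhoods $U_i$ of diameter less than $\epsilon$ with $\{x_i,y_i\}\subseteq U_i$ (possible because all $2n$ points are distinct), and to use strong homogeneity inside each $U_i$ to find a homeomorphism $h_i$ of $U_i$ with $h_i(x_i)=y_i$. Extending each $h_i$ by the identity outside $U_i$ and composing produces the required $\epsilon$-small $h$. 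Part (c), the Hilbert cube $Q$, follows by the same strategy, using Anderson's $Z$-set unknotting theorem in place of homogeneity: finite subsets of $Q$ are $Z$-sets, any homeomorphism between compact $Z$-sets extends to an autohomeomorphism of $Q$, which gives (B$_X$), and small disjoint ``boxes'' $\cong Q$ of diameter less than $\epsilon$ around the pairs $\{x_i,y_i\}$ provide the localization needed for (A$_X$).

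The main obstacle is the uniform $\epsilon$-control in (A$_X$) for parts (a) and (c). Pointwise transitivity is classical in all three categories, but condition (A$_X$) demands that $d(h(x),x)<\epsilon$ holds for \emph{every} $x\in X$, not just at the moved points. The localization via disjoint $\epsilon$-small clopen sets in the Cantor case is essentially trivial; the corresponding localization in the manifold case relies on the dimension hypothesis $n\ge 2$ (to separate arcs joining the pairs), and in the Hilbert cube case on a local form of $Z$-set unknotting. These are standard but each has to be invoked carefully.
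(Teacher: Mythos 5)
Your proposal is correct and takes essentially the same route as the paper: the paper offers no separate argument for this corollary, but simply applies Theorem~\ref{nomin} with $T=X$ (so $H(X|T)=H(X)$ and every nonempty $\tau$-open set is all of $H(X)$), leaving (A$_X$) and (B$_X$) to exactly the classical homogeneity, isotopy-extension and $Z$-set unknotting facts you invoke. The only point worth making explicit is that, since $\tau_p$ and $\tau_k$ do not depend on the choice of compatible metric, it suffices to verify (A$_X$) for one convenient metric (a geodesic metric on the manifold, the standard metrics on the Cantor set and the Hilbert cube); this is what your ``balls/boxes of diameter less than $\epsilon$ containing $x_i,y_i$'' tacitly assumes, as an arbitrary compatible metric need not admit such sets.
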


Taking $T$ to be the hypothesized open $n$-cell, for $n \ge 2$ we deduce:
\begin{cor}
If $X$ is a compact metric space containing an open $n$-cell, for $n \ge 2$, then $H(X)$ does not have a minimum Hausdorff group topology.
\end{cor}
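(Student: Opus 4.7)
The plan is to apply Theorem~\ref{nomin} with $T$ taken to be the hypothesized open $n$-cell, or, if the metric geometry requires, a carefully chosen sub-cell of it. Since $n \ge 2$, this $T$ is open, nonempty, and free of isolated points, so the task reduces to verifying the two conditions (A$_T$) and (B$_T$). Both rely on the standard flexibility of the homeomorphism group of an $n$-cell of dimension at least two: dimension $\ge 2$ is precisely what permits routing multiple pairwise disjoint arcs between prescribed endpoints while avoiding any prescribed finite obstacle set.

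For (B$_T$), given a finite $F \subset T$ and distinct source/target tuples in $T \setminus F$, I would pick pairwise disjoint arcs in $T \setminus F$ between corresponding pairs, thicken them to pairwise disjoint compact tubular $n$-ball neighborhoods, and build a compactly supported homeomorphism in each tube realizing the pairing; extending by the identity on $X \setminus T$ yields the required element of $B_F^{\tau_p} \cap H(X|T)$. For (A$_T$), the same arc-and-tube construction must additionally guarantee that the $X$-displacement of the composite is below $\epsilon$. This reduces to choosing each tube $V_i$ with $X$-diameter less than $\epsilon$: when $\epsilon$ exceeds the $X$-diameter of $\overline{T}$ this is automatic, while for smaller $\epsilon$ one exploits $d(x_i,y_i) < \epsilon$ together with uniform continuity on a local chart to fit each pair $\{x_i, y_i\}$ inside a small tube of $X$-diameter under $\epsilon$, keeping the tubes disjoint by invoking $n \ge 2$.

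The main obstacle is the metric coordination in (A$_T$): an arbitrary open $n$-cell may be wildly embedded so that points close in $d$ lie in no connected subset of $T$ of small $X$-diameter (think of a slit-disk-style embedding). I would handle this by shrinking at the outset to a sub-cell of compact closure parametrized by a homeomorphism $\phi \colon \overline{B}^n \to \overline{T}$ between compact spaces---hence uniformly continuous both ways---so that the Euclidean and $X$-metrics on $\overline{T}$ become uniformly equivalent. Carrying out the arc-and-tube construction in the Euclidean model with sufficiently small supports and transferring back via $\phi$ then yields (A$_T$) through modulus-of-continuity estimates, splitting into the cases $\epsilon \le \text{diam}_d \overline{T}$ and $\epsilon > \text{diam}_d \overline{T}$. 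Once (A$_T$) and (B$_T$) are in place, Theorem~\ref{nomin} applies and directly yields that $H(X)$ admits no minimum Hausdorff group topology.
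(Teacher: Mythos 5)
Your overall strategy is the one the paper intends: apply Theorem~\ref{nomin} with $T$ an open cell inside the given one and verify (A$_T$) and (B$_T$). The (B$_T$) part of your sketch is essentially fine (modulo the routine point that when the source and target tuples overlap you should compose two moves through a generic intermediate tuple rather than use disjoint tubes directly). The genuine gap is in your verification of (A$_T$), exactly at the point you call the main obstacle. Passing to a sub-cell with $\cl{T}=\phi(\cl{B}^n)$ compact does make $\phi$ and $\phi^{-1}$ uniformly continuous, but it does not make the two metrics comparable at a common scale, and your modulus-of-continuity transfer composes the moduli the wrong way round: from $d(x_i,y_i)<\epsilon$ you only get $|\phi^{-1}(x_i)-\phi^{-1}(y_i)|<\eta(\epsilon)$, whereas to conclude that the transferred homeomorphism lies in $B_\epsilon^{\tau_k}$ you need the Euclidean tube carrying the move to have diameter below the modulus $\mu(\epsilon)$ of $\phi$; nothing forces $\eta(\epsilon)\le\mu(\epsilon)$, and typically $\eta(\epsilon)\gg\mu(\epsilon)$.

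Moreover this is not merely a defective estimate: (A$_T$) itself can fail for a sub-cell with compact closure, so no refinement of the estimate can rescue that choice of $T$. Embed a $2$-cell in the plane as a long, thin spiral strip of overall diameter about $1$ with many tight windings, let $X$ be its closure, and let $T=\phi(B)$ where $B$ is a round ball whose image still covers several windings. Two points $x,y\in T$ on adjacent windings satisfy $d(x,y)<\epsilon$ for small $\epsilon$, yet no $h\in H(X|T)$ with $h(x)=y$ lies in $B_\epsilon^{\tau_k}$: $h$ fixes the frontier of the strip, so the image of a cross-cut of the strip through $x$ is a connected set containing both the (fixed) endpoints of that cross-cut and $y$, hence by the intermediate value theorem it contains $h(z)$ lying half a winding beyond $z$, and such a point is displaced by roughly the diameter of the spiral. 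So compactness of $\cl{T}$ plus two-sided uniform continuity is not the right hypothesis; what your argument actually needs, and never establishes, is that inside an arbitrary (possibly metrically wild) open $n$-cell one can select $T$ so that, for every $\epsilon$, points of $T$ at $d$-distance less than $\epsilon$ lie in a connected subset of $T$ of $d$-diameter less than $\epsilon$ --- note that (A$_T$) demands displacement $<\epsilon$ for the \emph{same} $\epsilon$, so shrinking $T$ only disposes of the large-$\epsilon$ case. Supplying such a selection (the paper itself simply takes $T$ to be the given cell and leaves the verification implicit) is the missing step in your proposal.
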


\section{Shrinking the Compact-Open Topology}

In this section we introduce a method of `shrinking' the compact-open topology around a closed subset. Under reasonable conditions this yields a topological group topology. Let $(X,d)$ be compact metric. Let $C$ be a closed subset of $X$. For $\epsilon >0$ define $C_\epsilon=C_\epsilon^d=\{x \in X : d(x,C)<\epsilon\}$.
Let $\tau_{k|C}$ be the collection of all unions of all translates of sets of the form: $B_\epsilon^{\tau_{k|C}} = \{ h \in H(X) : d(h(x),x)<\epsilon, d(h^{-1}(x),x)<\epsilon$ for all $x \in X\setminus C_\epsilon\}$. Note that if $d'$ is another metric on $X$ which is compatible with $\mathcal{T}_d$, then by compactness,  given $\epsilon>0$ there is a $\delta>0$ such that $C_\delta^d \subseteq C_\epsilon^{d'}$ and $C_\delta^{d'} \subseteq C_\epsilon^{d}$. So $\tau_{k|C}$ is independent of the choice of compatible metric.

To verify that we have indeed defined a topological group topology we apply the following well known result (see \cite[13G.6]{Wil} for example).
\begin{thm}\label{NbdChTG} Let $G$ be a group with identity $1$. 
Suppose $\mathscr{U}$ is a family of subsets of $G$ all containing $1$ satisfying the following conditions: (a) for each $U \in \mathscr{U}$, there exists $V \in \mathscr{U}$ with $V^2 \subseteq U$; 
(b) for each $U \in \mathscr{U}$, there exists $V \in \mathscr{U}$ with $V^{-1} \subseteq U$; 
(c) for each $U \in \mathscr{U}$ and $x \in \mathscr{U}$, there exists $V \in \mathscr{U}$ with $xV \subseteq U$; 
(d) for each $U \in \mathscr{U}$ and $x \in G$, there exists $V \in \mathscr{U}$ with $xVx^{-1} \subseteq U$; and
(e) for each $U, V \in \mathscr{U}$, there exists $W \in \mathscr{U}$ with $W \subseteq U \cap V$. 

Then $\{xU : x \in G, \ U \in \mathscr{U}\}$ is a basis for a topology $\tau$ on $G$ making $G$ into a topological group. The topology $\tau$ is $T_1$ (or equivalently, Hausdorff, or Tychonoff) if and only if $\bigcap \mathscr{U}=\{1\}$.
\end{thm}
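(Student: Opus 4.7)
The plan is to verify, in order, three things: that $\mathscr{B}=\{xU:x\in G,\,U\in\mathscr{U}\}$ is a basis for some topology $\tau$ on $G$; that multiplication and inversion are $\tau$-continuous; and that $\tau$ is Hausdorff precisely when $\bigcap\mathscr{U}=\{1\}$.

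For the basis step, covering is immediate because $1\in U$ gives $x\in xU$ for every $x\in G$ and every $U\in\mathscr{U}$. For the intersection axiom, given $z\in xU\cap yV$, I would set $u=x^{-1}z\in U$ and $v=y^{-1}z\in V$, apply (c) twice to extract $U',V'\in\mathscr{U}$ with $uU'\subseteq U$ and $vV'\subseteq V$, and then invoke (e) to pick a common refinement $W\in\mathscr{U}$ with $W\subseteq U'\cap V'$. Then $z\in zW\subseteq xU\cap yV$, which is all that is needed.

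For continuity of multiplication at a point $(x,y)$, I would start from an arbitrary basic $\tau$-neighbourhood of $xy$; a single application of (c) reduces to the case $xy\cdot U$ with $U\in\mathscr{U}$. Pick $V\in\mathscr{U}$ with $V^2\subseteq U$ using (a), and $W\in\mathscr{U}$ with $y^{-1}Wy\subseteq V$ using (d). Then $(xW)(yV)=x(Wy)V\subseteq xy(y^{-1}Wy)V\subseteq xyV^2\subseteq xyU$, which gives continuity. For continuity of inversion at $x$, reduce similarly to a basic neighbourhood $x^{-1}U$ of $x^{-1}$, use (b) to pick $V$ with $V^{-1}\subseteq U$, and use (d) to pick $W$ with $xWx^{-1}\subseteq V$. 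A short manipulation shows $(xW)^{-1}=x^{-1}(xWx^{-1})^{-1}\subseteq x^{-1}V^{-1}\subseteq x^{-1}U$.

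Finally, I would invoke the standard fact that in any topological group the separation axioms $T_0$, $T_1$, Hausdorff and Tychonoff coincide, so it suffices to decide when $\tau$ is $T_1$. Since the basic neighbourhoods of $1$ are precisely the members of $\mathscr{U}$, the space is $T_1$ iff each $x\ne 1$ is excluded by some $U\in\mathscr{U}$, equivalently $\bigcap\mathscr{U}=\{1\}$. The only step demanding genuine care is the multiplication argument, where one must apply (d) to the correct element (here $y$) so as to transport the left-identity neighbourhood $W$ across $y$ on the right side; the remaining arguments are bookkeeping using the five closure conditions.
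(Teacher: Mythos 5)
Your proof is correct and complete. Note that the paper does not prove this theorem at all: it is quoted as a well-known result with a citation to Willard \cite[13G.6]{Wil}, and your argument is precisely the standard verification such a reference supplies --- basis axioms from (c) and (e), joint continuity of multiplication from (a) and (d) applied at $y^{-1}$, continuity of inversion from (b) and (d), and the separation criterion from the fact that $\mathscr{U}$ is a neighbourhood base at $1$. The one phrase to tighten is ``the basic neighbourhoods of $1$ are precisely the members of $\mathscr{U}$'': a basic open set containing $1$ has the form $xU$ with $x^{-1}\in U$, and one more application of (c) is needed to shrink it to a member $V\in\mathscr{U}$ with $V\subseteq xU$; with that observation your $T_1$ criterion $\bigcap\mathscr{U}=\{1\}$, and the Hausdorff/Tychonoff statement via the standard coincidence of separation axioms in topological groups (which the theorem statement itself takes as known), goes through exactly as you say.
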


\begin{thm} \label{thm:taukC} Let $(X,d)$ be compact metric, $C$ be a closed subset of $X$ and define $\tau_{k|C}$ as above. Then the following hold:

(1) $(H(X),\tau_{k|C})$ is a group topology \emph{provided} $C$ is $H(X)$-invariant ($h(C)=C$ for all $h$ in $H(X)$),

(2) $\tau_{k|C}$ is $T_0$ if $C$ is closed nowhere dense, and

(3) $\tau_{k|C} \subseteq \tau_k$ with equality if and only if for every $\epsilon > 0$ there is an $0<\delta \le \epsilon$ such that if $h$ in $H(X)$ satisfies: $d(h(x),x) < \delta$ and $d(h^{-1}(x),x) < \delta$ for all $x \notin C_\delta$ then $d(h(x),x) < \epsilon$ and $d(h^{-1}(x),x) < \epsilon$ for all $x \in C_\delta$.
\end{thm}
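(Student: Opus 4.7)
The plan is to prove (1) by verifying the five neighborhood-base conditions of Theorem~\ref{NbdChTG} for the family $\mathscr{U} = \{B_\epsilon^{\tau_{k|C}} : \epsilon > 0\}$; (2) will then follow from the $T_0$-criterion in the same theorem applied to this family, and (3) reduces to a direct comparison of basic neighborhoods at the identity.

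For (1), conditions (b) and (e) are immediate: each $B_\epsilon^{\tau_{k|C}}$ is symmetric by construction, and the family is nested downward in $\epsilon$ (smaller $\epsilon$ gives both a thinner $C_\epsilon$ and a tighter bound, so $B_{\min(\epsilon,\epsilon')}^{\tau_{k|C}} \subseteq B_\epsilon^{\tau_{k|C}} \cap B_{\epsilon'}^{\tau_{k|C}}$). For (a), take $V = B_{\epsilon/2}^{\tau_{k|C}}$: for $g, h \in V$ and $y \notin C_\epsilon$, first $y \notin C_{\epsilon/2}$ gives $d(h(y), y) < \epsilon/2$; then $d(h(y), C) \geq d(y,C) - \epsilon/2 \geq \epsilon/2$ forces $h(y) \notin C_{\epsilon/2}$, so $d(g(h(y)), h(y)) < \epsilon/2$, and the triangle inequality produces $d(gh(y), y) < \epsilon$ (the inverse bound is symmetric). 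For (c), fix $x \in B_\epsilon^{\tau_{k|C}}$; by compactness of $X \setminus C_\epsilon$ the continuous function $y \mapsto \max(d(x(y), y), d(x^{-1}(y), y))$ attains on it a maximum $\alpha < \epsilon$, and uniform continuity of $x, x^{-1}$ then yields a $\delta > 0$ such that $B_\delta^{\tau_{k|C}}$-perturbations leave $x$ inside $B_\epsilon^{\tau_{k|C}}$.

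The main step is condition (d), and this is where the $H(X)$-invariance of $C$ is used in an essential way. Given $x \in H(X)$ and $\epsilon > 0$, the set $K = X \setminus C_\epsilon$ is compact and disjoint from $C$; by invariance so is $x^{-1}(K)$, whence $\eta := d(x^{-1}(K), C) > 0$ (and similarly for $x(K)$). Using uniform continuity of $x$, choose $\delta \leq \eta$ with $d(a,b) < \delta \Rightarrow d(x(a), x(b)) < \epsilon$. Then for $v \in B_\delta^{\tau_{k|C}}$ and $y \notin C_\epsilon$ the point $z = x^{-1}(y)$ lies outside $C_\eta \supseteq C_\delta$, so $d(v(z), z) < \delta$, and $d(xvx^{-1}(y), y) = d(x(v(z)), x(z)) < \epsilon$; the bound on $(xvx^{-1})^{-1}$ is handled identically. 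Without the invariance hypothesis, $x^{-1}(y)$ could lie in or arbitrarily near $C$, so no such uniform $\eta$ would be available.

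For (2), by the $T_0$-clause of Theorem~\ref{NbdChTG} it suffices to show $\bigcap_{\epsilon > 0} B_\epsilon^{\tau_{k|C}} = \{1\}$. If $h$ lies in every $B_\epsilon^{\tau_{k|C}}$ and $x \notin C$, then for each $\epsilon \in (0, d(x,C))$ we have $x \notin C_\epsilon$ and hence $d(h(x), x) < \epsilon$; so $h(x) = x$. Since $C$ is nowhere dense, $h$ fixes a dense set and continuity forces $h = 1$. For (3), the inclusion $B_\epsilon^{\tau_k} \subseteq B_\epsilon^{\tau_{k|C}}$ (the inverse bound comes by substituting $y = h^{-1}(x)$ into $d(h(y), y) < \epsilon$) gives $\tau_{k|C} \subseteq \tau_k$ unconditionally. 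Equality amounts to each $B_\epsilon^{\tau_k}$ containing some $B_\delta^{\tau_{k|C}}$: combining the automatic control on $X \setminus C_\delta$ from membership in $B_\delta^{\tau_{k|C}}$ with the required control on $C_\delta$, and applying the argument to both $h$ and $h^{-1}$, this unpacks to exactly the stated condition. The principal technical obstacle throughout is the quantifier bookkeeping between $\delta$-perturbations and the thickening $C_\delta$, with invariance being the crucial input in the conjugation step.
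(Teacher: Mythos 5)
Your proposal is correct and follows essentially the same route as the paper: conditions (a)--(e) of Theorem~\ref{NbdChTG} with $H(X)$-invariance entering only in the conjugation step (your $\eta=d(x^{-1}(K),C)>0$ is just the paper's $C_{\delta_1}\subseteq f(C_\epsilon)$ in different clothing), the criterion $\bigcap_{\epsilon>0}B_\epsilon^{\tau_{k|C}}=\{1\}$ for (2), and a comparison of identity neighborhoods for (3). Your density-plus-continuity argument for (2) is in fact a little cleaner than the paper's explicit $\epsilon/3$ contradiction, but it is the same underlying idea.
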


\begin{proof} \ 

\noindent {\bf For (1):} We verify conditions (a) through (e) of the neighborhood characterization of a topological group topology (Theorem~\ref{NbdChTG}).

(a) We need  to show: for every $\epsilon >0$ there is a $\delta>0$ such that $B_\delta^2 \subseteq B_\epsilon$. 

To this end fix $\epsilon >0$ and set $\delta = \epsilon/2$. Take  any $g_1, g_2 \in B_\delta$. Fix $x \not \in C_\epsilon$, then $x \not \in C_\delta$. Since $d(g_1(x), x) < \delta$ we see $g_1(x) \not \in C_\delta$,  so $d(g_2 g_1(x), g_1(x)) < \delta $ and then  $d(g_2 g_1(x), x) < 2 \delta < \epsilon$. 
Similarly $d(g_1^{-1}g_2^{-1}(x), x) < \epsilon$. 
So for all $g_1, g_2 \in B_\delta, g_2 g_1, \in B_\epsilon$, in other words  $B_\delta^2 \subseteq B_\epsilon$, as required.

(b) We need to show: for every $\epsilon>0$ there is a $\delta>0$ such that $B_\delta^{-1} \subseteq B_\epsilon$. 
But by definition of $B_\epsilon$, we have $B_\epsilon = B_\epsilon^{-1}$, so we can take $\delta=\epsilon$.

(c) We need to show: for all $\epsilon >0$ and $f \in B_\epsilon$, there is a $\delta>0$ such that $fB_\delta \subseteq B_\epsilon$. 

Fix then $f \in B_\epsilon$, and note $f^{-1} \in B_\epsilon$. 
By compactness of $X$, \[\epsilon_1=\max  \{d(f(x), x), d(f^{-1}(x), x): x \not \in C_\epsilon\}  < \epsilon.\] 
Let $\epsilon_2 = \epsilon - \epsilon_1$. Since 
$f$ and $f^{-1}$ are uniformly continuous on $X \setminus C_\epsilon$, there is a $\delta' >0$ such that for every $x, y \in X$ if $d(x, y) < \delta'$ then $d(f(x), f(y)) < \epsilon_2$ and  $d(f^{-1}(x), f^{-1}(y)) < \epsilon_2$. 
Now let $\delta = \min \{ \delta', \epsilon_2 \}$. 
Take any $x \not \in C_\epsilon$. Then $x \not \in C_\delta$. Take any $g \in B_\delta$. Then $g^{-1} \in B_\delta$. 
So if $d(g(x), x) < \delta$ then $d(fg(x), f(x)) < \epsilon_2$, and  hence 
$d(fg(x), x) \le d(fg(x), f(x)) + d(f(x), x) \le \epsilon_2 + \epsilon_1 = \epsilon$. 
On the other hand, $d(f^{-1}(x), x) \le \epsilon_1$. 
Since $x \not \in C_\epsilon$, we know $f^{-1}(x) \not \in C_\delta$,   and so $d(g^{-1}f^{-1}(x), f^{-1}(x)) < \delta \le \epsilon_2$. 

Thus $d((fg)^{-1}(x), x) \le d(g^{-1}f^{-1}(x), f^{-1}(x)) + d(f^{-1}(x), x) < \epsilon_2 + \epsilon_1 =\epsilon$, and $fg \in B_\epsilon$ for all $g \in B_\delta$, as required.

(d) We need  to show: for every $\epsilon>0$, and $f \in H(X)$ there is a $\delta>0$ such that $f B_\delta f^{-1} \subseteq B_\epsilon$. 

Fix  $f \in H(X)$. We make two observations: 
(1) as $C_\epsilon$ is open in $X$ so $f(C_\epsilon)$ is open in $X$, and since $C$ is $H(X)$-invariant, there is a $\delta_1$ such that $C_{\delta_1} \subseteq f(C_\epsilon)$; and 
(2) as $f^{-1}$ is uniform continuous there is a $\delta_2$ such that for all $x, y \in X$ if  $d(x, y) < \delta_2$ then we have $d(f^{-1}(x), f^{-1}(y)) < \epsilon$. 

Let $\delta = \min \{ \delta_1, \delta_2 \}$. 
Observation (1) implies $x \not \in C_\epsilon, f(x) \not \in C_\delta$. 
So for all $g \in B_\delta$, we have $d(gf^{-1}(x), f^{-1}(x)) < \delta$ and $d(g^{-1}f^{-1}(x), f^{-1}(x)) < \delta$. 
Observation (2) implies $d(fg^{-1}f^{-1}(x), ff^{-1}(x)) < \epsilon$ and $d(fgf^{-1}(x), x) < \epsilon$. 
Hence $fB_\delta f^{-1} \subseteq B_\epsilon$, as required.

(e) We need to show: for all $\epsilon_1, \epsilon_2$ there is a $\delta > 0$ such that $B_\delta \subseteq B_{\epsilon_1} \intersect B_{\epsilon_2}$. 
But after setting $\delta = \min \{ \epsilon_1, \epsilon_2 \}$ we are done.

\medskip

\noindent {\bf For (2):}  Suppose $C$ is closed nowhere dense. We show $\tau_{k|C}$ is $T_0$, by verifying that $\{1\} = \bigcap \{B_\epsilon: \epsilon > 0 \}$. 

Clearly $\{1\} \subseteq \bigcap \{B_\epsilon: \epsilon > 0 \}$. 
If, for a contradiction, there is a $g$ in the intersection such that $g \neq 1$, then there must be an $x \in X$ such that $d(g(x), x) = \epsilon > 0$. 
If $x \in X \setminus C$, then $d(x, C) = \epsilon' > 0$. 
Let $\delta = \min \{ \epsilon', \epsilon \}$, then $d(g(x), x) > \delta/2$, so $g \not \in B_{\delta/2}$. 
Contradiction! 
It follows that $g(x) \neq x$, for some $x \in C$ and $g(y) = y$ for all $y \not \in C$. 
Since $g$ is uniformly continuous, there is a $\delta$ such that if $d(x, y) < \delta'$ then $d(g(x), g(y)) < \epsilon/3$. 
Let $\delta = \min \{ \delta', \epsilon/3 \}$. 
As $C$ is closed and nowhere dense, $C$ does not contain any open ball in $X$, in particular $\mathcal{B}_\delta(x) := \{t \in X: d(t,x)<\delta \} \not\subseteq C$. 
So there is a $y \not \in C$ such that $d(x, y) < \delta$.  
Then $\epsilon  = d(f(x), x) \le d(x, y) + d(y, f(y)) + d(f(y), f(x))   < \epsilon/3 + 0 + \epsilon/3 < \epsilon$. 
Contradiction again, and Claim~(2) is established. 

\medskip

\noindent {\bf For (3):} Clearly $\tau_{k|C} \subseteq \tau_k$. We verify that  equality holds if and only if $\epsilon > 0$ there is a $0<\delta \le \epsilon$ such that if $h$ in $H(X)$ satisfies: $d(h(x),x) < \delta$ and $d(h^{-1}(x),x) < \delta$ for all $x \notin C_\delta$ then $d(h(x),x) < \epsilon$ and $d(h^{-1}(x),x) < \epsilon$ for all $x \in C_\delta$ ($\ast$). 

\smallskip

($\Rightarrow$) Suppose equality holds, and in particular $\tau_k \subseteq \tau_{k|C}$. Then given any $\epsilon>0$ we know  $B_\epsilon^{\tau_k}$ is open in $\tau_{k|C}$. 
So there is a $\delta>0$  such that $B_\delta^{\tau_{k|C}} \subseteq B_\epsilon^{\tau_k}$. Of course we may assume $\delta \le \epsilon$. We show $\delta$ satisfies  ($\ast$).

So take any $h$ in $H(X)$ such that $d(h(x),x) < \delta$ and $d(h^{-1}(x),x) < \delta$ for all $x \notin C_\delta$. Then  $h$ is in $B_\delta^{\tau_{k|C}}$. So $h$ is in $B_{\epsilon}^{\tau_k}$, which means $d(h(x),x) < \epsilon$ and $d(h^{-1}(x),x) < \epsilon$ for all $x \in X$, and hence certainly all $x$ in $C_\delta$.

\smallskip

($\Leftarrow$) Now suppose we have ($\ast$). Take any basic $B_\epsilon^{\tau_k}$. We need to show it is in $\tau_{k|C}$ -- in other words, we need to find a $0<\delta$ so that $B_\delta^{\tau_{K|C}} \subseteq B_\epsilon^{\tau_k}$, in other words: if $h$ is in $H(X)$ and $d(h(x),x) < \delta$ and $d(h^{-1}(x),x) < \delta$ for all $x \notin C_\delta$ then $d(h(x),x) < \epsilon$ and $d(h^{-1}(x),x) < \epsilon$ for all $x \in X$.

However, let $\delta$ be as given by ($\ast$).   If $h \in H(X)$ and  $d(h(x),x) < \delta$ and $d(h^{-1}(x),x) < \delta$ for all $x \notin C_{\delta}$ we have $d(h(x),x) < \epsilon$ and $d(h^{-1}(x),x) < \epsilon$ for all $x \in C_\delta$. Since $\delta \le \epsilon$ we also have $d(h(x),x) < \epsilon$ and $d(h^{-1}(x),x) < \epsilon$ for all $x \notin C_\delta$. And so the  relevant inequalities hold for all $x$ in $X$ -- as needed.
\end{proof}

 Let $M$ be a compact manifold with non-empty boundary, $C$. Applying the above we get that $\tau_{k|C}$ is the minimum  Hausdorff topological group topology on $H(M)$. It is straightforward to check that condition 3) fails, so $\tau_k$ is not minimal.
\begin{cor} The compact-open topology on the homeomorphism group of a compact manifold with non-trivial boundary is not minimal. 
\end{cor}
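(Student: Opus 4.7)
The plan is to apply Theorem~\ref{thm:taukC}: show that $\tau_{k|C}$ is a Hausdorff topological group topology on $H(M)$ strictly coarser than $\tau_k$, where $C$ denotes the boundary. Since the boundary of a manifold is closed and nowhere dense (the manifold interior being dense) and is invariant under every self-homeomorphism (by invariance of domain), parts~(1) and~(2) of Theorem~\ref{thm:taukC} immediately give that $\tau_{k|C}$ is a $T_0$, hence Hausdorff, topological group topology on $H(M)$ contained in $\tau_k$. The remaining task is to show the inclusion is strict, which by part~(3) reduces to finding an $\epsilon > 0$ such that for arbitrarily small $\delta$ there is an $h \in H(M)$ whose inverse is also the identity outside $C_\delta$, yet which moves some point of $C_\delta$ a distance at least $\epsilon$. (The argument will need dimension at least two, consistent with the fact noted in the introduction that for a $1$-manifold the compact-open topology is already the minimum group topology on $H(M)$.)

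For the construction I would invoke Brown's collaring theorem to obtain an embedding $\Phi : C \times [0,1] \hookrightarrow M$ with $\Phi(x,0) = x$ for every $x \in C$ and image a neighborhood of $C$. Because $C$ is a compact $(n{-}1)$-manifold without boundary of positive dimension, it admits a homeomorphism $\psi$ isotopic to the identity via an isotopy $\{\psi_t\}_{t \in [0,1]}$ (with $\psi_0 = \mathrm{id}$, $\psi_1 = \psi$) such that $\psi$ moves some fixed point $p \in C$ by a positive distance $\epsilon_0$; concretely, pick an arc in a component of $C$ and push along it inside a Euclidean coordinate chart. For each $\delta > 0$ choose $s > 0$ small enough, by uniform continuity of $\Phi$ on the compact set $C \times [0,1]$, so that $\Phi(C \times [0,s]) \subseteq C_\delta$, and define
\[
h_\delta(y) \;=\; \begin{cases} \Phi\bigl(\psi_{1 - t/s}(x),\, t\bigr) & \text{if } y = \Phi(x,t),\ 0 \le t \le s, \\ y & \text{otherwise.} \end{cases}
\]
This is a self-homeomorphism of $M$: the two cases agree on $\Phi(C \times \{s\})$ (where $\psi_0 = \mathrm{id}$), and an inverse is obtained by replacing $\psi_{1-t/s}$ with $\psi_{1-t/s}^{-1}$.

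By construction, $h_\delta$ and $h_\delta^{-1}$ are the identity outside $\Phi(C \times [0,s]) \subseteq C_\delta$, while $h_\delta$ restricted to $C$ equals $\psi$, so $d(h_\delta(p), p) \ge \epsilon_0$ for the point $p \in C \subseteq C_\delta$, independent of $\delta$. Setting $\epsilon = \epsilon_0$ therefore witnesses the failure of the condition ($\ast$) of Theorem~\ref{thm:taukC}(3), so $\tau_{k|C} \subsetneq \tau_k$ and $\tau_k$ is not minimal. The only real obstacle is the construction of $h_\delta$: one needs Brown's theorem to obtain a collar, and one needs $C$ to be of positive dimension in order to have an isotopy of $C$ with displacement bounded below independently of $\delta$. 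Both ingredients are standard, which is presumably why the authors label the verification \emph{straightforward}.
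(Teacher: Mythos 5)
Your proposal is correct and takes essentially the same route as the paper: the paper also applies Theorem~\ref{thm:taukC} with $C=\partial M$ and merely asserts that condition (3) "is straightforward to check" to fail, while you supply that check explicitly with a boundary collar and an isotopy of $\partial M$ pushed into a thin collar neighborhood. Your observation that this needs $\dim M \ge 2$ (equivalently, $\partial M$ of positive dimension) correctly identifies a hypothesis left implicit in the paper's statement, since for $M=[0,1]$ condition (3) holds and $\tau_k$ is in fact minimal.
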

In particular, the compact-open topology on the homeomorphism group of the Mobius band is not minimal, answering Question~4.28 of \cite{DM}.

\section{Homeomorphism Groups with Minimum}

Let $X$ be a space. Define $O_X$ (`the one-manifold part of $X$') to be $O_X = \{x\in X: \ x$  has a neighborhood homeomorphic to   $(0,1)\}$. Define  $S_X$ to be all points with a clopen neighborhood homeomorphic to the circle, $S^1$, and $I_X=O_X \setminus S_X$. Note that each point of $I_X$ has a neighborhood homeomorphic to $(0,1)$ which is clopen in $O_X$. Define $C_X=X  \setminus O_X$.
Suppose $X$ is compact and metrizable. Then observe: 
(a) $I_X$ is a countable (possibly empty) disjoint sum of copies of $(0,1)$, and $S_X$ is a countable (possibly empty) disjoint sum of circles, and 
(b) $O_X$ is open, and the disjoint sum of $I_X$ and $S_X$.

For any homeomorphism $h$ of a space $X$, define  $\move (h) = \{ x \in X: h(x) \neq x \}$.

The key result on the existence of a minimum Hausdorff group topology  on certain homeomorphism groups is the following. 
\begin{thm}\label{thm:taukc_min}
If $X$ is a compact metrizable space  such that $O_X$ is dense in $X$, then $\tau_{k|C_X}$ is the minimum Hausdorff topological group topology on $H(X)$. 
\end{thm}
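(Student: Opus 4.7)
The strategy is to verify $\tau_{k|C_X}$ satisfies the two defining properties of a minimum Hausdorff group topology: it is itself such a topology, and it is contained in every other Hausdorff group topology on $H(X)$.

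For the first requirement, I would apply Theorem~\ref{thm:taukC} to $C = C_X$. Any $h \in H(X)$ preserves the topological property of ``having a neighborhood homeomorphic to $(0,1)$'', so $h(O_X) = O_X$ and hence $h(C_X) = C_X$; thus $C_X$ is $H(X)$-invariant. Also $C_X = X \setminus O_X$ is closed (as $O_X$ is open) and nowhere dense (as $O_X$ is dense in $X$). Parts (1) and (2) of Theorem~\ref{thm:taukC} then immediately give that $\tau_{k|C_X}$ is a Hausdorff topological group topology on $H(X)$.

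For the minimality, let $\tau$ be an arbitrary Hausdorff group topology on $H(X)$. It suffices to show that for each $\epsilon > 0$, the basic set $B_\epsilon^{\tau_{k|C_X}}$ is a $\tau$-neighborhood of $1$. My first step is geometric: set $K = X \setminus (C_X)_\epsilon$, a compact subset of $O_X$. Since each component of the one-manifold $O_X$ is open in $O_X$ and the components are pairwise disjoint, a straightforward compactness argument on $K$ forces $K$ to meet only finitely many components $J_1, \ldots, J_m$ of $O_X$, each an arc or a circle. So membership in $B_\epsilon^{\tau_{k|C_X}}$ amounts to an $\epsilon$-displacement bound on finitely many compact pieces of arcs and circles.

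The core step, where I expect the real difficulty, is to produce for each $i$ a $\tau$-open neighborhood $V_i$ of $1$ such that every $h \in V_i$ satisfies $d(h(x),x) < \epsilon$ and $d(h^{-1}(x),x) < \epsilon$ on $K \cap J_i$; then $V := \bigcap_i V_i$ is a $\tau$-open neighborhood of $1$ contained in $B_\epsilon^{\tau_{k|C_X}}$. I would build $V_i$ by adapting to $H(X)$ the commutator and conjugation arguments of Gartside--Glyn \cite{GaGl}, which established the corresponding minimum result for homeomorphism groups of one-manifolds. These arguments use the Hausdorff hypothesis (to supply a single $h_0 \ne 1$ separated from $1$ by a $\tau$-open set), together with the abundance of homeomorphisms of $X$ supported in an arbitrarily small portion of a single arc or circle component of $O_X$ and extended by the identity to all of $X$.

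The principal obstacle is this last step. Gartside--Glyn work inside $H(J)$ for a one-manifold $J$, whereas here one must operate inside $H(X)$, where a generic element may permute the finitely many components $J_1, \ldots, J_m$ and may behave arbitrarily on $C_X$. The adaptation must therefore (a) verify that the small-support homeomorphisms needed for the conjugation tricks genuinely live in $H(X)$ (which requires that arcs/circles of $O_X$ admit the identity-off-a-compactum extensions), and (b) refine each $V_i$ further so as to rule out the permutation of the finitely many relevant components and localize the analysis to component-preserving behavior — for instance by intersecting with a suitable $\tau$-open set that fixes a pointwise-separating finite set in each $J_i$.
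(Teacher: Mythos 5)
Your first half matches the paper: $C_X$ is $H(X)$-invariant, closed and nowhere dense, so Theorem~\ref{thm:taukC} gives that $\tau_{k|C_X}$ is a Hausdorff group topology contained in $\tau_k$; and your compactness reduction of $B_\epsilon^{\tau_{k|C_X}}$ to finitely many compact pieces of arc and circle components of $O_X$ is also the paper's first step. The problem is that your ``core step'' is left as ``adapt the Gartside--Glyn commutator arguments,'' and the one concrete device you offer for the genuinely new difficulty would fail. In an arbitrary Hausdorff group topology $\tau$ on $H(X)$ there is no ``$\tau$-open set that fixes a pointwise-separating finite set in each $J_i$'': sets defined by prescribing the images of finitely many points need not be $\tau$-open. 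Indeed $\tau_{k|C_X}$ itself is a Hausdorff group topology in which no point stabilizer of a point of $O_X$ is open, and the paper stresses (e.g.\ for $\mathop{Aut}(Q,<)$) that the minimum topology is in general strictly coarser than the pointwise topology. So you cannot ``rule out permutation of the components'' by intersecting with such sets; the only $\tau$-open neighborhoods of $1$ available to you are those manufactured from continuity of the group operations together with closedness of $\{1\}$.

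That is precisely what the paper's proof (of the stronger Theorem~\ref{thm:taukc_min_GEN}) constructs, and it is the idea missing from your outline. For each small parameter interval $[a,b]$ of each arc or circle component one chooses non-commuting $p,q\in H(X)$ supported in $f_k^{-1}[a,b]$ and sets $T(a,b,k)=\{g : gpg^{-1} \text{ does not commute with } q \text{ and } g^{-1}pg \text{ does not commute with } q\}$; this is open in \emph{every} Hausdorff group topology, being the preimage of the open set $H(X)\setminus\{1\}$ under continuous commutator maps. A support computation then shows that $g\in T(a,b,k)$ forces some point of $f_k^{-1}[a,b]$ to be sent by $g$ (and some point by $g^{-1}$) back into $f_k^{-1}[a,b]$, and intersecting the $T(a,b,k)$ over a partition of each component into pieces of $d$-diameter less than $\epsilon/3$ yields a neighborhood of $1$ contained in $B_\epsilon^{\tau_{k|C_X}}$. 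This single mechanism simultaneously handles homeomorphisms that permute components or act arbitrarily on $C_X$, with no pointwise-fixing sets needed. Without an argument of this kind your sets $V_i$ are not constructed, so the proposal has a genuine gap at its central step.
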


However, to deal with automorphism groups in the final section we prove a strengthening of Theorem~\ref{thm:taukc_min} to include certain subgroups of the homeomorphism group. 

\begin{thm}\label{thm:taukc_min_GEN}
Let $X$ be a compact metrizable space  such that $O_X$ is dense in $X$. Let $G$ be a subgroup of $H(X)$ such that for every open subset $U$ of $X$ homeomorphic to $(0,1)$  there is a non-trivial $g$ in $G$ with $\move(g) \subseteq U$.


Then the topology $\tau_{k|C_X}$ on $H(X)$ restricted to $G$ is the  minimum Hausdorff group topology on $G$.
\end{thm}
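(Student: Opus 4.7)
My plan proceeds in two stages. In stage one I verify that $\tau_{k|C_X}$ restricted to $G$ is itself a Hausdorff topological group topology on $G$, and in stage two I show that it is refined by every other Hausdorff group topology on $G$.

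For stage one, note that $C_X=X\setminus O_X$ is closed (as $O_X$ is open), nowhere dense (as $O_X$ is dense by hypothesis), and $H(X)$-invariant (since the local one-manifold structure defining $O_X$ is preserved under any self-homeomorphism). Parts~(1) and~(2) of Theorem~\ref{thm:taukC} then make $\tau_{k|C_X}$ a $T_0$, and hence Hausdorff, group topology on $H(X)$. Restriction to the subgroup $G$ preserves these properties.

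For stage two, fix a Hausdorff group topology $\tau$ on $G$. Since $\{B_{1/n}^{\tau_{k|C_X}}\cap G\}_{n\in\N}$ is a countable neighborhood base at $1$ for $\tau_{k|C_X}|_G$, it suffices to show each $B_\epsilon^{\tau_{k|C_X}}\cap G$ is a $\tau$-neighborhood of $1$. Suppose not, and take a net $(h_\lambda)\subseteq G\setminus B_\epsilon$ with $h_\lambda\to 1$ in $\tau$; each $h_\lambda$ admits a witness $x_\lambda\in X\setminus C_\epsilon$ with $d(h_\lambda(x_\lambda),x_\lambda)\ge\epsilon$ (possibly after swapping $h_\lambda$ for $h_\lambda^{-1}$ on a cofinal subnet, since inversion is $\tau$-continuous). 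Compactness gives a further subnet with $x_\lambda\to x_*\in X\setminus C_\epsilon\subseteq O_X$ and $h_\lambda(x_\lambda)\to y_*$, where $d(x_*,y_*)\ge\epsilon$. Fix an open arc neighborhood $A$ of $x_*$ in $X$, homeomorphic to $(0,1)$, with $\cl A$ disjoint from $y_*$: eventually $x_\lambda\in A$ and $h_\lambda(x_\lambda)\notin\cl A$.

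The technical heart of the argument is to use the hypothesis on $G$ together with the one-manifold local structure of $A$ to derive a Hausdorff contradiction. By continuity of each individual $h_\lambda$, there is for each large $\lambda$ an open subarc $U_\lambda\subseteq A$ containing $x_\lambda$ with $h_\lambda(U_\lambda)\cap A=\emptyset$. The hypothesis on $G$ then yields a nontrivial $g_\lambda\in G$ with $\move(g_\lambda)\subseteq U_\lambda$, so that $\move(g_\lambda)$ and $h_\lambda(\move(g_\lambda))$ are disjoint and the commutator $c_\lambda=[g_\lambda,h_\lambda]$ agrees with the nontrivial $g_\lambda$ on $\move(g_\lambda)$. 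The main obstacle is that the na\"ive conclusion ``$c_\lambda$ is eventually nontrivial yet $\tau$-convergent to~$1$'' does not immediately contradict Hausdorffness, because $c_\lambda$ varies with $\lambda$ and $\tau$-convergence of $c_\lambda$ to $1$ is not automatic unless the auxiliary $g_\lambda$ are also $\tau$-controlled. The plan is to arrange, via a diagonal nesting of the supports $\move(g_\lambda)$ inside a shrinking sequence of arcs $A\supseteq A^{(1)}\supseteq A^{(2)}\supseteq\cdots$ converging to $x_*$, and then to couple each commutator with a second small-support element $g'\in G$ produced by the hypothesis (supported strictly inside the chain), that an iterated commutator construction yields a net in $G$ eventually equal to a single fixed nontrivial element while still $\tau$-convergent to~$1$, contradicting Hausdorffness. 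The delicate bookkeeping needed to stabilize this construction---ensuring both the required disjointness of supports and the persistence of the iterated commutator---is the main technical step and parallels the Gartside--Glyn one-manifold argument, with the support hypothesis on $G$ playing the role that full small-support transitivity plays there.
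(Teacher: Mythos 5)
Your stage one is fine and matches the paper. But stage two has a genuine gap, and you have in effect named it yourself: after producing the varying small-support elements $g_\lambda$ and the commutators $c_\lambda=[g_\lambda,h_\lambda]$, you observe correctly that this does not contradict Hausdorffness because $c_\lambda$ changes with $\lambda$ and nothing controls $g_\lambda$ in the unknown topology $\tau$. The proposed repair --- a ``diagonal nesting'' of supports and an ``iterated commutator construction yielding a net eventually equal to a single fixed nontrivial element while $\tau$-convergent to $1$'' --- is not carried out, and it is hard to see how it could be under the stated hypothesis: to stabilize the commutators to one fixed nontrivial element you would need to move one small support onto another \emph{within} $G$, i.e.\ some transitivity, but the hypothesis on $G$ gives only local non-triviality (one nontrivial element supported in each arc), and the unknown topology $\tau$ gives you no equicontinuity-type control over products involving the auxiliary $g_\lambda$. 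So the heart of the proof is missing.

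The paper closes exactly this gap by reversing the quantifiers: instead of building a convergent net of bad elements, it fixes the witnesses in advance and manufactures $\tau$-open sets. For each arc or circle $f_k^{-1}[a,b]$ in a finite cover of $X\setminus C_\epsilon$ it chooses, using your same local non-triviality trick, a \emph{fixed} non-commuting pair $p,q\in G$ supported in $f_k^{-1}[a,b]$, and considers $T(a,b,k)=\{g\in G:\ [gpg^{-1},q]\ne 1\ \text{and}\ [g^{-1}pg,q]\ne 1\}$. Since $g\mapsto [gpg^{-1},q]$ is continuous for any group topology and $\{1\}$ is $\tau$-closed, $T(a,b,k)$ is $\tau$-open and contains $1$; and if $g\in T(a,b,k)$ then $g$ cannot map $f_k^{-1}[a,b]$ entirely off itself (otherwise $gpg^{-1}$ and $q$ would have disjoint supports and commute). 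Partitioning each $I_n$ and $S_{N+m}$ into subarcs of $d$-diameter less than $\epsilon/3$ and intersecting the finitely many corresponding $T$-sets then yields a $\tau$-open neighborhood of $1$ contained in $B_\epsilon^{\tau_{k|C}}\cap G$. If you want to complete your write-up, replacing the net/diagonalization plan with this fixed-pair, open-set argument is the step you need; your construction of non-commuting small-support pairs is already the right ingredient for it.
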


\begin{proof}
Let $d$ be a compatible metric on $X$. Let $C = C_X$ and recall $C$ is the set of points in $X$ with no neighborhood homeomorphic to $(0, 1)$. Clearly $C$ is $H(X)$-invariant. As $O_X$ is open and dense by hypothesis, $C$ is closed and nowhere dense. Hence by Theorem~\ref{thm:taukC}, $\tau_{k|C}$ is a Hausdorff topological group topology on $H(X)$, contained in the compact-open topology $\tau_k$. We show its restriction to $G$ is the minimum Hausdorff topological group topology on $G$.

We denote the closed unit interval, $[0,1]$, by $I$ and write $S^1$ for the unit circle $\{(\cos (2\pi \theta), \sin (2\pi \theta)) : \theta \in [0,1)\}$. Given $a < b$ we write $[a,b]$ for the subset of $S^1$ given by $\{ (\cos (2\pi \theta), \sin (2\pi \theta)) : \theta \in [a,b]\}$. It will always be clear from context if `$[a,b]$' is an interval in $\R$ or a subset of $S^1$.

Fix $\epsilon > 0$, let $C_\epsilon = \{x \in X: d(x, C) < \epsilon\}$. We need to show $B=B_\epsilon^{\tau_{k|C}} \cap G=\{ h \in G : d(h(x),x) < \epsilon \text{ and } d(h^{-1}(x),x) < \epsilon \text{ for all } x \notin C_\epsilon\}$ is open in every Hausdorff group topology on $G$.

By compactness of $X$, $X \setminus C_\epsilon \subseteq  \bigoplus_{n=1}^N I_n \oplus \bigoplus_{m=1}^M S_{N+m} \subseteq X \setminus C$, where each $I_n$ is homeomorphic to a closed interval and each $S_{N+m}$ is homeomorphic to a circle. Each $I_n$ is contained in a $J_n$ homeomorphic to $\R$, and we  fix a homeomorphism $f_n$ of $J_n$ with $\R$ so that $f_n$ maps $I_n$ to $I \subseteq \R$. For each $S_{N+m}$ fix a homeomorphism $f_{N+m}$ of $S_{N+m}$ to $S^1$.

Take any open subset $U$ of $X$ which is homeomorphic to $(0,1)$. By hypothesis there is a $p$ in $G$ such that $p \ne 1$ and $\move(p) \subseteq U$. As $p$ is non-trivial we can find an open subset $V$ of $U$ which is homeomorphic to an open sub-interval of $U$ such that $p(V) \cap V = \emptyset$. Applying the hypothesis to $V$ there is a $q$ in $G$ such $\move(q) \subseteq V$ and $q(x) \ne x$ for some $x$ in $V$. Now we see that $p$ and $q$ do not commute: since $x \in V$, $p(x) \notin V$, so $q(p(x))=p(x)$, but $q(x) \ne x$ forces $p(q(x)) \ne p(x)$. 
Thus for any $a < b$ and $k \le N+M$ we can find non-commuting $p_k^{(a,b)}$ and $q_k^{(a,b)}$ in $G$ such that $f_k(\move(p_k^{(a,b)}))$ and $f_k(\move(q_k^{(a,b)}))$ are subsets of $[a,b]$. Fix $k \le N+M$ and define $T(a,b,k) = \{g \in G : g p_k^{(a,b)} g^{-1} \text{ and } q_k^{(a,b)}$ do not commute and $g^{-1} p_k^{(a,b)} g \text{ and } q_k^{(a,b)}$  do not commute$\}$.

\begin{claim}
In any Hausdorff group topology on $G$ the set $T(a, b,k)$ is open and contains $1$.
\end{claim}

 For notational  convenience write $p$ for $p_k^{(a,b)}$ and $q$ for $q_k^{(a,b)}$.
That $1$ is in $T(a,b,k)$ is immediate since  $1p1^{-1} = p = 1^{-1}p1$, and $p=p_k^{(a,b)}$ does not commute with $q=q_k^{(a,b)}$.

Now consider the  maps
$\varphi, \psi: G  \rightarrow G$ defined by $\varphi(g)= gpg^{-1} q (gpg^{-1})^{-1} q^{-1}$ and $\psi(g)=g^{-1}pg q (g^{-1}pg)^{-1} q^{-1}$.
These maps are continuous for every topological group topology on $G$.
Note that $\varphi(g) = 1$ if and only if $gpg^{-1}$ and $q$ commute.
Similarly, $\psi(g) = 1$ if and only if $g^{-1}pg$ and $q$ commute.
Since $\{1\}$ is closed in any Hausdorff topology on $G$, the set $V = G \setminus \{1\}$ is open, and then the set $\varphi^{-1}(V) \intersect \psi^{-1}(V) = T(a, b,k)$ is open in any Hausdorff topological group topology, as required.

\begin{claim}\label{claimT}
If $g \in T(a,b,k)$, then  there exists $x, y$ in $f_k^{-1}[a,b]$ such that $g(x)$ and $g^{-1}(y)$ are in $f_k^{-1}[a,b]$.
\end{claim}

Let $U_k=f_k^{-1} [a,b]$.
We prove by contradiction that there is an $x$ in $U_k$ such that $g(x)$ is in $U_k$. By symmetry in the definition of $T(a,b,k)$, as $g$ is in $T(a,b,k)$, so is $g^{-1}$, and  the existence of the required $y$ follows.

For notational simplicity, write $p$ for $p_k^{(a,b)}$ and $q$ for $q_k^{(a,b)}$. For concreteness let us suppose $k=N+m$, so $U_k$ is a subset of $S_k$. (The case when $k \le N$ is identical except each occurrence below of `$S_k$' should be replaced with `$J_k$'.)
For the desired contradiction suppose that for all $x$ in $U_k$ we have $f_k(g(x)) \notin [a,b]$.

Note that if $z \in g(S_{k} \setminus U_{k})$, then $g^{-1}(z) \in S_{k} \setminus U_{k}$.
This implies $pg^{-1}(z) = g^{-1}(z)$, because $f_{k}(\move(g)) \subseteq [a, b]$, and $gpg^{-1}(z) = gg^{-1}(z) = z$.
Therefore $z \not \in \move(gpg^{-1})$, so $\move(gpg^{-1}) \subseteq g(U_{k})$.

Since $g(U_{k}) \subseteq X \setminus U_{k}$, we have $\move(gpg^{-1}) \subseteq X \setminus U_k$.
Then $\move(gpg^{-1})$ and $\move(q)$ are disjoint.

Fix $x \in S_{k}$.
 If $x \in U_{k}$, then $q(gpg^{-1})(x) = q(x) \subseteq U_{k}$, so $(gpg^{-1})q(x) = q(x)$.
Thus $q$ and $gpg^{-1}$ commute  at $x$.
 Similarly, if $x \in S_{k} \setminus U_{k}$, then $q$ and $gpg^{-1}$ commute at $x$.

Hence $gpg^{-1}$ and $q$ commute on $S_{k}$. By choice of $p=p_k^{(a,b)}$ and $q=q_k^{(a,b)}$, $gpg^{-1}$ and $q$ certainly commute outside $S_{k}$ (where they are the identity).
Since $g$ is in $T(a,b,k)$, we have our desired contradiction.

\medskip

We now show that some finite intersection of $T(a,b,k)$'s is contained in the given basic $B$. Since all $T(a,b,k)$'s are open neighborhoods of the identity in any Hausdorff group topology on $G$, this completes the proof that $\tau_{k|C_X}$ restricted to $G$ is the minimum Hausdorff topological group topology on $G$.

To this end, pick $t \in \N$ such that for all $k \le N+M$ and $i=-2,0, \ldots , t$ the $d$-diameter of $f_k^{-1} [i/t,(i+1)/t]$ is $< \epsilon/3$.
Define
\[T= \bigcap \left\{T(i/t,(i+1)/t,k) : k \le N+M, \ i=-2,0,\ldots , t \right\}.\]

\begin{claim}
If $h \notin B$ then $h \notin T$.
\end{claim}

Take any $h$ in $G$ which is not in $B$. So there is an $x \in X \setminus C_\epsilon$ such that $d(h(x), x) \ge \epsilon$ or $d(h^{-1}(x), x) \ge \epsilon$. The sets $T(a,b,k)$ in the definition of $T$ are symmetric -- if $h$ is in $T(a,b,k)$ then so is $h^{-1}$ -- so we may assume, without loss of generality, that $d(h(x),x)) \ge \epsilon$.
Then  $x \in I_n$ for some $n$, in which case set $k=n$ and $A=I_k$, or $x \in S_{N+m}$ for some $m$, in which case set $k=N+m$ and $A=S_k$.

Consider an interval $K=[i/t, (i+1)/t]$ in the circle $S^1$. Let $K_- = [i_-/t, (i_- +1)/t]$ and $K_+= [i_+/t, (i_+ +1)/t]$, where $i_- = (i-1) \pmod t$ and $i_+=(i+1) \pmod t$. So $K_-$ is the interval `preceding' $K$ and $K_+$ is the interval `succeeding' $K$ in the natural cyclic order. Similarly, consider an interval $K=[i/t, (i+1)/t]$ in the closed unit interval $I$. Let $K_- = [i_-/t, (i_- +1)/t]$ and $K_+= [i_+/t, (i_+ +1)/t]$, where $i_- = i-1$ and $i_+=i+1$.

There is a unique interval $K=[i/t,(i+1)/t]$ such that $f_k(x)$ is in $K$ but not in $K_-$. Note that $-1 \le i \le t-1$.
We will show $h$ is not in (at least) one of $T(i_-/t, (i_-+1)/t,k)$, $T(i/t, (i+1)/t,k)$, or $T(i_+/t, (i_++1)/t,k)$, and hence is not in $T$, as desired.

Otherwise, from the preceding Claim, we can pick $x_{-1}$ in $f_k^{-1} K_-$ such that $h(x_{-1})$ is also in $f_k^{-1} K_-$, $x_0$ in $f_k^{-1} K$ such that $h(x_0)$ is  in $f_k^{-1} K$,   and $x_1 \in f_k^{-1} K_+$ such that $h(x_1) \in f_k^{-1} K_+$. Observe that it follows that $h$ maps $J_k$ (respectively, $S_k$) homeomorphically to $J_k$ (respectively, $S_k$) if $k \le N$ (respectively, $k>N$).

Indeed as $f_k(x_{-1}) < f_k(x_0) < f_k(x_{1})$ (either in the cyclic order on $S^1$, or standard order on $\R$) $h$ maps $f_k^{-1} [x_{-1},x_{1}]$ into $f_k^{-1} (K_- \cup K \cup K_+)$. In particular $h(x)$ is in $f_k^{-1}(K_- \cup K \cup K_+)$. But as the $d$-diameter of each of $f_k^{-1} K_-$, $f_k^{-1} K$ and $f_k^{-1} K_+$ is no more than $\epsilon/3$, this means $d(h(x),x) \le 2(\epsilon/3)$, contradicting $d(h(x),x) \ge \epsilon$.
\end{proof}

\subsection{Conditions on $X$ ensuring  $\tau_{k|C} \ne \tau_k$}

Here we give sufficient conditions for the minimum topology to be strictly smaller than the compact-open topology. Note that this implies that the compact-open topology is not minimal.

\begin{prop}\label{suff_tkcnetk}
Let $X$ be compact metrizable, and suppose $O_X$ is dense in $X$. Let $C=C_X$. In the following cases $\tau_{k|C} \ne \tau_k$, and the compact-open topology on $H(X)$ is not minimal:

(a) $C$ contains at least two points which are the limit of clopen circles ($|\cl{S_X} \cap C| \ge 2$), or

(b) there is a component $I$ of $I_X$ whose closure meets $C$ in at least three points ($|\cl{I} \cap C| \ge 3$).
\end{prop}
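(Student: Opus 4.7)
The plan is to apply Theorem~\ref{thm:taukC}(3) in contrapositive form: to show $\tau_{k|C} \ne \tau_k$, I exhibit $\epsilon > 0$ such that for every $\delta \in (0,\epsilon]$ some $h \in H(X)$ satisfies $d(h(x),x) < \delta$ and $d(h^{-1}(x),x) < \delta$ for all $x \notin C_\delta$, yet $d(h(x_0),x_0) \ge \epsilon$ for some $x_0 \in X$. Non-minimality of $\tau_k$ then follows because the density of $O_X$ makes $C = C_X$ closed nowhere dense, and $C_X$ is manifestly $H(X)$-invariant, so Theorem~\ref{thm:taukC}(1),(2) give that $\tau_{k|C}$ is a Hausdorff group topology; the failure of ($\ast$) then makes it strictly coarser than $\tau_k$.

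\emph{Case (a).} Pick distinct $p,q \in \cl{S_X} \cap C$ and let $\epsilon = d(p,q)/3$. For $\delta \in (0,\epsilon]$, use that $p$ and $q$ are limits of clopen circles to choose disjoint clopen circles $S_1, S_2$ of $S_X$ with $S_1$ inside the open $\delta$-ball around $p$ and $S_2$ inside the open $\delta$-ball around $q$; both lie in $C_\delta$. Define $h \in H(X)$ to swap $S_1$ and $S_2$ by any circle-homeomorphism and to fix $X \setminus (S_1 \cup S_2)$ pointwise (a homeomorphism since $S_1, S_2$ are clopen). Then $h$ is the identity on $X \setminus C_\delta$, and for any $x \in S_1$ the triangle inequality gives
\[ d(h(x),x) \ge d(p,q) - d(x,p) - d(h(x),q) > 3\epsilon - 2\delta \ge \epsilon. \]

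\emph{Case (b).} Fix a component $I$ of $I_X$ with $|\cl{I}\cap C|\ge 3$ and a homeomorphism $f\colon(0,1)\to I$. Set $L_0=\bigcap_{s>0}\cl{f((0,s))}$ and $L_1=\bigcap_{s>0}\cl{f((1-s,1))}$. Then $\cl{I}\cap C = L_0\cup L_1$: no point of $S_X$ can lie in $\cl{I}$ (its clopen circle neighborhood would miss the open set $I$), and every point of $\cl{I}\setminus I$ is a limit of $f$ at one end of $(0,1)$. Each $L_j$ is non-empty, compact, and connected as a nested intersection of non-empty compact connected sets $\cl{f((0,s))}$. Since a finite connected metric space is a singleton, the hypothesis $|L_0\cup L_1|\ge 3$ forces some $L_j$, say $L_0$, to be a non-degenerate continuum; choose distinct $p_1,p_2\in L_0$ and alternating sequences $s_n>t_n>s_{n+1}\to 0$ with $f(s_n)\to p_1$ and $f(t_n)\to p_2$, and set $\epsilon=d(p_1,p_2)/4$. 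Given $\delta\in(0,\epsilon]$, a routine compactness argument from the definition of $L_0$ produces $s_0>0$ with $f((0,s_0))\subseteq C_\delta$. Choose $n$ large and $\eta>0$ small so that $J:=f([t_{n+1}-\eta, s_n+\eta])\subseteq C_\delta$ and $d(f(s_n),p_1), d(f(t_{n+1}),p_2)<\delta$. Let $\phi$ be any increasing self-homeomorphism of $[t_{n+1}-\eta, s_n+\eta]$ that fixes the endpoints and sends $t_{n+1}\mapsto s_n$ (piecewise-linear works). Define $h$ to equal $f\phi f^{-1}$ on $J$ and the identity on $X\setminus J$; since $\phi$ fixes $\partial[t_{n+1}-\eta, s_n+\eta]$ these pieces agree on $\partial J$, so $h\in H(X)$. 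Then $h$ is the identity on $X\setminus C_\delta$, while
\[ d(h(f(t_{n+1})),f(t_{n+1})) = d(f(s_n),f(t_{n+1})) \ge d(p_1,p_2)-2\delta \ge 2\epsilon. \]

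The main obstacle is case (b): turning the coarse hypothesis ``$|\cl{I}\cap C|\ge 3$'' into a usable geometric handle. The decisive observation is that this condition forces one end-limit set $L_j$ to be a non-degenerate continuum, which then supplies arbitrarily short-parameter arcs of $I$ that fit into arbitrarily tight neighborhoods of $C$ yet carry uniformly large diameter in $X$ -- precisely the room needed to insert a large ``shift'' inside $C_\delta$ without disturbing anything outside.
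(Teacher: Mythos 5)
Your proof is correct and follows essentially the same route as the paper's: in both cases you refute condition $(\ast)$ of Theorem~\ref{thm:taukC}(3) by producing, for every $\delta$, a homeomorphism supported inside $C_\delta$ (a swap of two small clopen circles in (a), a shift along a terminal arc of $I$ between points accumulating at two points of $C$ in (b)) that moves some point by at least $\epsilon$, and then invoke Theorem~\ref{thm:taukC}(1),(2) for non-minimality of $\tau_k$. Your bookkeeping in case (b) via the end-limit continua $L_0,L_1$ is just a slightly more explicit version of the paper's use of the (at most two) components of $\cl{I}\setminus I$ and the ray $J$.
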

\begin{proof} \ Fix a compatible metric $d$ for $X$. In both cases we verify that the condition for equality of $\tau_{k|C}$ and $\tau_k$ in Theorem~\ref{thm:taukC}~(3) fails.

\noindent {\bf For (a):} Let $x_1$ and $x_2$ be distinct points in $C$ which are the limit of circles in $S_X$. Let $\epsilon= d(x_1,x_2)/2$. Take any $0<\delta \le \epsilon$. Then we can find circles $S_1$ and $S_2$ in $S_X$ so that $S_i \subseteq B_d(x_i,\delta/4)$ for $i=1,2$. Define $h$ to be a homeomorphism of $X$ which is the identity outside $S_1 \cup S_2$ and which switches $S_1$ and $S_2$. Then $h$ is the identity outside $C_\delta$ but moves points of $C_\delta$ (namely all those in $S_1 \cup S_2$) at least $d(x_1,x_2) - (\delta/4 + \delta/4) \ge d(x_1,x_2)/2 =\epsilon$. 

\noindent {\bf For (b):} Fix the open interval $I$ in $I_X$ such that $|\cl{I} \cap C| \ge 3$. The set $\cl{I} \setminus I$, which is the remainder of a compatification of an open interval,  has either one or two components. By hypothesis we can pick two distinct points of $C$, say $x_1$ and $x_2$, which are in the same component. Then $x_1$ and $x_2$ are in the closure of a ray, $J$, (homeomorphic to $[1/2,1)$) contained in $I$. Let $\epsilon=d(x_1,x_2)/2$.  Take any $0<\delta \le \epsilon$. Without loss of generality we can assume the ray $J$ is contained in $C_\delta$.
We  can pick $y_1, y_1'$ and $y_2,y_2'$ in the ray $J$ so that: $y_1$ precedes $y_1'$ in $J$, $y_1'$ precedes $y_2'$, $y_2'$ precedes $y_2$ in $J$, $d(x_i,y_i) < \delta/4$ and $d(x_i,y_i') < \delta/4$ for $i=1,2$. Pick a homeomorphism $h$ of $X$ which is the identity outside the closed subinterval of $J$ between $y_1$ and $y_2$, but which moves $y_1'$ to $y_2'$. Then as in case (a), $h$ is the identity outside $C_\delta$ but moves a point (namely, $y_1'$) of $C_\delta$ at least $d(x_1,x_2)/2=\epsilon$. 
\end{proof}

\begin{eg} \ 

(a) The  homeomorphism group of the disjoint sum of two convergent sequences of circles, with the two limit points, ($X=\bigcup_n S_n(0,0) \cup \bigcup_n S_n(2,0) \cup \{(0,0),(2,0)\}$, where $S_n(\mathbf{x})$ is the circle of radius $1/n$ centered at $\mathbf{x}$), has a minimum group topology, but the compact-open topology is not minimal.

(b) The homeomorphism group of the topologist's sine curve has a minimum group topology, but the compact-open topology is not minimal.

(c) For every compact metric space $K$ there is a compact metric $X=X_K$ such that $O_X$ is dense and the countably infinite disjoint sum of circles, and $C_X$ is homeomorphic to $K$. This space $X_K$ has a minimum group topology, but the compact-open topology is not minimal.

(d) For every non-zero dimensional compact metric space $K$ there is a compact metric $X=X_K$ such that $O_X$ is dense and the countably infinite disjoint sum of open intervals, and $C_X$ is homeomorphic to $K$. This space $X_K$ has a minimum group topology, but the compact-open topology is not minimal.
\end{eg}
\begin{proof} Examples (a) and (b) are easy exemplars of cases (a) and (b) of the above proposition. We sketch (c) and (d).

For (c) recall that every compact metric space is the remainder of a compactification of $\N$. It is clear we can replace each $n$ in  $\N$ with a circle. This gives $X_K$. That $\tau_{k|C} \ne \tau_k$ is immediate from (a) of the preceding proposition.

For (d) first note that every compact metric space, $K$, is the remainder of a compactification, $Z_K$, of a countably infinite disjoint sum of open intervals (see Lemma~\ref{lotsExs} below). Our hypothesis is that $K$ is not zero-dimensional. So it is not totally disconnected, and contains a non-trivial component $K'$. Now we can add an open interval, $I$, to $Z_K$, to get a new compact metric space $X_K$, so that $\cl{I} \setminus I = K'$. Then $X_K$  has the required topological properties, and has $\tau_{k|C} \ne \tau_k$ by case (b) of the preceding proposition.

\end{proof}

\subsection{Conditions on $X$ ensuring  $\tau_{k|C}=\tau_k$}

Here we give sufficient conditions for the minimum topology to coincide with the compact-open topology. Note that this implies that the compact-open topology is  minimal. The relevant spaces are similar to the class of (compact, metrizable) \emph{graph-like} spaces recently introduced by Thomassen and Vella \cite{ThVe}. Given a topological space $X$, an edge of $X$ is an open subset homeomorphic to $(0,1)$, whose closure is a simple arc. Then $X$ is  graph-like if there is a collection $E$ of pairwise disjoint edges of $X$ such that $X \setminus E$ is zero-dimensional.

\begin{prop}\label{suff_tkceqtk}
Let $X$ be compact metrizable, and suppose $O_X$ is dense in $X$. Let $C=C_X$. In the following cases  $\tau_{k|C}=\tau_k$, and the compact-open topology on $H(X)$ is minimal:

(a)   $C$ is zero-dimensional, $X \setminus S_X$ has only finitely many components, and $S_X$ has at most one limit point in $C$, or

(b) $C$ is a convergent sequence and $S_X$ has at most one limit point in $C$.
\end{prop}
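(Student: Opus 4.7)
I would verify the criterion of Theorem~\ref{thm:taukC}(3): given $\epsilon > 0$, exhibit $\delta \in (0, \epsilon]$ such that any $h \in H(X)$ with $d(h(x),x), d(h^{-1}(x),x) < \delta$ for $x \notin C_\delta$ also satisfies $d(h(x),x), d(h^{-1}(x),x) < \epsilon$ for $x \in C_\delta$. In both parts the structural hypotheses force such an $h$ to preserve a finite family of small sets covering $C_\delta$, from which the conclusion follows from diameter bounds. The hypotheses are tailored precisely to rule out the two obstructions exhibited in Proposition~\ref{suff_tkcnetk}.

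For case (a), let $Y_1,\ldots,Y_r$ be the finitely many components of $X\setminus S_X$ and $c^*$ the at-most-one accumulation point of $S_X$ in $C$. Using that $C$ is compact and zero-dimensional, partition $C$ into clopen-in-$C$ pieces $A_1,\ldots,A_K$ of diameter less than $\epsilon/4$, refined so that each $A_k$ sits inside a single $Y_{j(k)}$. Choose pairwise disjoint open neighborhoods $V_k \supseteq A_k$ with $V_k\cap C = A_k$ and diameter less than $\epsilon/3$, and let $\eta > 0$ be the smaller of the minimum pairwise distances among the $\cl{V_k}$'s and among the $Y_j$'s. Then pick $\delta < \eta/3$ small enough that $C_\delta$ is contained in $\bigcup_k V_k$ together with a small neighborhood of $c^*$, and so that the finitely many circles of $S_X$ meeting $X\setminus C_\delta$ are pairwise separated by more than $3\delta$ (possible because $c^*$ is the only accumulation of $S_X$ in $C$). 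For $h$ satisfying the hypothesis: each $Y_j$ and each such ``large'' circle contains a point outside $C_\delta$ which is moved less than $\eta/3$, forcing $h$ to fix each $Y_j$ and each large circle setwise. Consequently $h$ permutes the $A_k$'s within each $Y_j$; a combinatorial argument using orientation preservation of the homeomorphism $h$ induces on each interval of $I_X\cap Y_j$ that meets $X\setminus C_\delta$ then forces every $A_k$, and hence every $V_k$, to be fixed setwise. For $x\in V_k$ this gives $d(h(x),x)$ bounded by the diameter of $V_k$, hence less than $\epsilon$; and for $x$ in a small circle of $S_X$ within the chosen neighborhood of $c^*$, $d(h(x),x)$ is bounded by a small multiple of $\delta$.

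For case (b), $C = \{c_n : n \ge 0\}$ converges to $c^* = c_0$, so any $h \in H(X)$ fixes $c^*$ (the unique non-isolated point of $C$) and permutes $\{c_n : n \ge 1\}$. Choose $\delta < \epsilon/2$ so that distinct $c_n$'s lying outside $C_\delta$ are separated by more than $\delta$ and so that the circles of $S_X$ meeting $X\setminus C_\delta$ are pairwise separated by more than $3\delta$. Then $h$ fixes each $c_n \notin C_\delta$, since $h(c_n)\in C$ and $d(h(c_n), c_n) < \delta$; and $h$ permutes the $c_n \in C_\delta$, all of which lie within $\delta$ of $c^*$, giving $d(h(c_n), c_n) < 2\delta < \epsilon$. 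Points of $C_\delta\cap O_X$ are handled by the same interval- and circle-preservation arguments used in (a), simplified by the fact that the only $c_n$ in $C$ with intervals of $I_X\cap C_\delta$ adjacent are close to $c^*$.

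The main obstacle is the combinatorial step in case (a): ruling out that $h$ non-trivially permutes the clopen pieces $A_k$ within a single component $Y_{j(k)}$. The argument exploits that each $A_k$ in $Y_j$ is connected to the rest of $Y_j$ through specific intervals of $I_X\cap Y_j$ whose ``outer'' portions (those meeting $X\setminus C_\delta$) are moved less than $\delta$; orientation preservation along these intervals then forces $h$ to fix each endpoint of the interval in $C$, which pins down $A_k$. This mechanism is exactly what the ``finitely many components'' hypothesis supplies, neutralizing the Proposition~\ref{suff_tkcnetk}(b) obstruction, while the ``at most one accumulation of $S_X$'' hypothesis plays the analogous role in neutralizing Proposition~\ref{suff_tkcnetk}(a).
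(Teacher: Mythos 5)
Your overall framing matches the paper's: verify the criterion of Theorem~\ref{thm:taukC}(3), exploit a partition of the zero-dimensional set $C$ into small well-separated pieces, and handle $S_X$ through its unique limit point. But the core of your case (a) is the claim that $h$ permutes, and then fixes setwise, the pieces $A_k$ (and even the neighborhoods $V_k$), and this is exactly where the proof is missing. First, $h$ preserves $C$, but there is no reason it should map a piece of your chosen partition onto another piece, so ``$h$ permutes the $A_k$'s'' is unjustified. Second, the proposed mechanism --- that intervals of $I_X$ meeting $X\setminus C_\delta$ are preserved because their outer points move less than $\delta$ --- fails as stated: components of $I_X$ accumulate on $C$, so a point just outside $C_\delta$ can be moved, by less than $\delta$, onto a \emph{different} component, and no uniform choice of $\delta$ rules this out. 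Third, a piece $A_k$ may be attached to the rest of its component only through intervals lying entirely inside $C_\delta$, which your ``outer portion'' argument never sees. Finally, the assertion that each $A_k$ is connected to the rest of $Y_j$ ``through specific intervals of $I_X\cap Y_j$'' is itself the nontrivial structural input; in the paper it is obtained by showing each component of $X\setminus S_X$ is a graph-like continuum, hence locally connected, hence a path-connected Peano continuum (via Thomassen--Vella), a step your sketch omits entirely.

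The paper avoids all of these preservation claims by a softer device: partition $C$ into pieces $C_1,\dots,C_n$ of diameter $<\epsilon/3$, with each component containing at least two pieces and $3\delta$ below their mutual distances, so that $C_\delta$ splits into the separated sets $(C_i)_\delta$; then suppose some $x\in(C_i)_\delta$ is sent into a different $(C_j)_\delta$, follow an arc $A$ from $x$ out of $(C_i)_\delta$, note that the first exit point lies outside $C_\delta$ and so moves less than $\delta$, and compare where $A$ and $h(A)$ first leave $(C_i)_\delta$ and $(C_j)_\delta$ to reach a contradiction --- no claim that $h$ preserves any particular piece, interval or circle is ever needed. Your case (b) also needs repair: since $C\subseteq C_\delta$, the step ``$h$ fixes each $c_n\notin C_\delta$'' is vacuous, and deferring the points of $C_\delta\cap O_X$ to the case (a) machinery ignores that in (b) the set $X\setminus S_X$ may have infinitely many components, which is precisely what the paper's extra conditions on $\delta$ (its clauses (iv) and (v)) and its interval-by-interval analysis near the $c_i$ with $i\le N$ are designed to handle.
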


\begin{proof} Fix a compatible metric $d$ for $X$.

\noindent {\bf For (a):} We will prove this in three steps. In all cases $C=C_X$ is zero-dimensional. 

Let us start by assuming that $X$ is connected and $S_X$ is empty. We claim that $X$ is path connected. This is not difficult to prove directly. Alternatively we can argue as follows. Since $O_X$ is dense but $S_X$ is empty, we have that $I_X$ is dense. Let $I_X^1$ be all components (open intervals) in $I_X$ whose closure is a circle and $I_X^2$ be all open intervals in $I_X$ whose closure is an arc. As $C$ is zero-dimensional, every open interval in $I_X$ has closure either a circle or an arc. As $X$ is connected it is clear that $X'=X \setminus I_X^1$ is also connected.
Then $X'$ is a graph-like metric continuum, so \cite[2.1]{ThVe} is locally connected. Then we see that $X'$ is a Peano continuum, and so path connected. Reattaching all of the open intervals in $I_X^1$ (and their unique limit point in $C$) preserves path connectedness, so $X$ is path connected.

Now we show that $\tau_{k|C}=\tau_k$ provided $S_X=\emptyset$ and $X$ has finitely many components. Note that from above each component of $X$ is path connected. We verify that the condition in Theorem~\ref{thm:taukC}~(3) for equality of $\tau_{k|C}$ and $\tau_k$ is satisfied.

Fix $\epsilon > 0$. 
Since $C$ is zero dimensional and $X$ has finitely many components, we can partition $C$ into finitely many pieces, say $C_1, \ldots, C_n$, each of $d$-diameter strictly less that $\epsilon/3$, ensuring that each component of $X$ contains at least two pieces of the partition.  Pick positive $\delta$ smaller than $\epsilon/3$ and $\frac{1}{3} \min \{d(C_i,C_j): i \ne j \}$. Note that $C_\delta$ is the disjoint union of $(C_i)_\delta = \{ x \in X : d(C_i,x) < \delta\}$ for $i=1, \ldots ,n$. 

Take any $h$ in $H(X)$ such that $d(h(x),x)<\delta$ and $d(h^{-1}(x),x)<\delta$ for all $x$ not in $C_\delta$. Suppose, for a contradiction, that for some $x$ in $C_\delta$ either $d(h(x),x) \ge \epsilon$ or $d(h^{-1}(x),x) \ge \epsilon$. We can assume, without loss of generality, that in fact $d(h(x),x) \ge \epsilon$.

Suppose $h(x)$ is not in $C_\delta$. Then let $y=h(x)$. Now $y \notin C_\delta$ but $d(h^{-1}(y),y)=d(x,h(x)) \ge \epsilon > \delta$, contradicting the hypothesis on $h$.
So $h(x)$ is in $C_\delta$. Fix $i$ such that $x$ is in $(C_i)_\delta$. Fix $j$ such that $h(x)$ is in $(C_j)_\delta$. Since the diameter of $C_i$ is $<\epsilon/3$ and $\delta < \epsilon/3$, the diameter of $(C_i)_\delta$ is $<\epsilon$. Since $d(h(x),x) \ge \epsilon$ we see $i \ne j$.

As each component of $X$ is path connected and contains at least two pieces of the partition of $C$, there is an arc $A$ from $x$ to some point not in $(C_i)_\delta$ (indeed to a point in some $C_k$ where $k \ne i$). As we travel along the arc $A$ from $x$ there is a (unique) first point, $x'$, where $A$ exits $(C_i)\delta$. Then $x'$ is not in $C_\delta$, so $d(h(x'),x')<\delta$, but $d(x',C_i) \le \delta$, and hence $h(x')$ cannot be in $(C_j)_\delta$. 

Now we see that $h(A)$ is an arc starting at $y=h(x)$ which exits $(C_j)_\delta$, say for the first time at $y'$. Then $y'$ is not in $C_\delta$, so $d(h^{-1}(y'),y') < \delta$, but $d(y',(C_j)_\delta) \le \delta$, and hence $x''=h^{-1}(y')$ cannot be in $(C_i)_\delta$. But $x''$ is in $(C_i)_\delta$, since $x''$ is on $A$ before it exits $(C_i)_\delta$ for the first time at $x'$. Contradiction!

To complete the proof of (a), suppose $S_X$ has at most one limit point in $C$ and $X \setminus S_X$ has finitely many components. Note that every $h$ in $H(X)$ maps each circle in $S_X$ to another circle in $S_X$, and if $S_X$ has a unique limit point then it is a fixed point of $h$. Take any $\epsilon>0$. If $S_X$ has no limit in $C$ (so it is a finite union of circles) then pick $\delta_0>0$ so that $C_{\delta_0}$ is disjoint from $S_X$. Otherwise, let $x_0$ be the unique limit in $C$ of $S_X$. In this case pick $\delta_0>0$ so that every circle in $S_X$ meeting $B_\delta(x_0)$ has diameter $<\epsilon/3$. Now apply the previous step to $X'=X \setminus S_X$ and get a relevant $\delta_1>0$. Let $\delta=\min (\delta_0,\delta)_1)$ . It is straightforward to check that if $h$ in $H(X)$ is such that $d(h(x),x)<\delta$ and $d(h^{-1}(x),x) < \delta$ for all $x \in X \setminus C$, then $d(h(x),x)$ and $d(h^{-1}(x),x)$ are both $<\epsilon$ for all $x$ in $C_\delta$.

\noindent {\bf For (b):}  We assume $S_X = \emptyset$. Otherwise we can deal with $S_X$ as we did at the end of case (a).
By hypothesis $C$ is a convergent sequence, say with unique limit point $c$.

Fix $\epsilon > 0$. Then (replacing $\epsilon$ with something smaller, if necessary) we can find $N$ and write $C=\{c_n: n \in \N\}$ so that  $c_n$ is not in $\cl{B_{\epsilon/3}(c)}$ for $n \le N$ but $c_n \in B_{\epsilon/3}(c)$ for $n>N$.
Pick $\delta >0$ satisfying the following conditions: (i) $\delta < \epsilon/3$, (ii) $B_{2\delta}(c_i) \cap B_{2\delta}(c_j) = \emptyset$ for distinct $i,j \le N$, (iii) $B_{2\delta}(c_i) \cap \cl{B_{\epsilon/3}(c)} =\emptyset$ for all $i \le N$, (iv) for $i \le N$ if $I$ is a component of $I_X$ (so, an open interval) such that $I \cap B_{2\delta}(c_i)\ne \emptyset$ then $c_i \in \cl{I}$, and (v) for $i \le N$ if $I$ is a component of $I_X$ (so, an open interval) with two endpoints, one of which is $c_i$ then $B_\delta(c_i) \cap I$ is a subinterval of $I$. (Note for (iv) and (v) there are only finitely many open intervals $I$ to deal with for each $i\le N$.)

Fix $h \in H(X)$ such that $d(h(x),x)$ and $d(h^{-1}(x),x)$ are both $<\delta$ for all $x$ not in $C_\delta$. Suppose, for a contradiction, that for some $x$ in $C_\delta$ either $d(h(x),x) \ge \epsilon$ or $d(h^{-1}(x),x) \ge \epsilon$. We can assume, without loss of generality, that in fact $d(h(x),x) \ge \epsilon$.

As $C$ is $h$-invariant, $h$ must take $c$ to $h(c)$ and any $c_i$ to some $c_j$. By choice of $\delta$ and restriction on $h$ we see that in fact $h$ is the identity on $\{c\} \cup \{c_1,\ldots, c_N\}$. Then for $i>N$ we must have that $h(c_i)=c_j$ where $j>N$. It follows (as all $d(c_i,c_j)<\epsilon$ for all $i,j >N$) that $x$ cannot be in $C$. Further, $y=h(x)$ is in $C_\delta$, for if $y$ is not in $C_\delta$  then $d(h^{-1}(y),y)=d(x,h(x)) \ge \epsilon > \delta$, contradicting the hypothesis on $h$.

Thus $x$ is in an open interval, $I$, a component of $I_X$. Let $\{c_i,c_j\}=\cl{I}\setminus I$ be the endpoint(s) of this interval. If both $i,j >N$ then $h(x)$ is not in any $B_\delta(c_k)$ for any $k \le N$, but is in $C_\delta$, and hence $x$ and $h(x)$ are in $\epsilon/3$-ball around $c$ -- contradicting $d(h(x),x) \ge \epsilon$. 

If $i=j\le N$ then $\cl{I}$ is a circle containing $c_i$. Then $h(\cl{I})$ is also a circle containing $I$, and so meets $B_\delta(c_i)$. By conditions (iii) and (iv) $h(\cl{I})$ meets no  part of $C_\delta$. Hence $h(x)$ must be in $B_\delta(c_i)$, and $d(h(x),x) < \epsilon$, contradiction.

Otherwise, at least one of $i$ and $j$ is $\le N$. Say $i$. By condition (v), the arc $A$, which is the subinterval of the arc $\cl{I}$ obtained by starting at $c_i$ and traveling along the arc to $x$, is contained in $B_\delta(c_i)$. Then $h(A)$ is an arc traveling from $c_i$ to $h(x)$ which is in $C_\delta$ but not in $B_\delta(c_i)$. So let $y'$ be the point on the boundary of $C_\delta$ (not in $C_\delta$) when $h(A)$ leaves the complement of $C_\delta$ near $h(x)$. Then, as $h^{-1}(y')$ is in $A$ which is contained in $B_\delta(c_i)$ but $y'$ is near $h(x)$, we see  $d(y',h^{-1}(y')) > \delta$, contradicting the hypothesis on $h$.
\end{proof}

The following examples are  all easily seen to satisfy  condition (a) above and indeed have finite $C$.

\begin{eg} \ 

(a) The homeomorphism group of a convergent sequence of circles (with unique limit)  has the minimum (and hence minimal) group topology, which is the compact-open topology.

(b) For every finite graph, the homeomorphism group with the compact-open topology is the minimum group topology (and so minimal).

(c) The homeomorphism group of the Hawaiian earring  has the minimum (and hence minimal) group topology, which is the compact-open topology.

(d) More generally, if $C$ is finite and $S_X$ has a unique limit point in $C$, then $X \setminus S_X$ has only finitely many components and $X$ can be obtained in the following way. Take any finite graph, which need not be connected, and may have multiple edges between vertices and from a vertex back to itself. Possibly `decorate' one vertex by adding a sequence of circles converging to that vertex. Possibly decorate any other vertex by adding a Hawaiian earring at that vertex. Then the homeomorphism group of such a space has the minimum group topology.
\end{eg}

Taking a convergent sequence, along with the limit point, of finite graphs or Hawaiian earrings (or spaces as in (d) above, but with no sequence of circles) will give spaces satisfying condition (b) of the preceding proposition.

Many examples also arise when $C$ is uncountable.
\begin{eg}\label{CantorB} \ 

(a) The homeomorphism group of the Cantor bouquet of semi-circles has the minimum (and hence minimal) group topology, which is the compact-open topology.

\begin{center}
\begin{tikzpicture}

\semicircle{5.0}{0}{5.0}
\semicircle{2.14285714286}{0}{2.14285714286}
\semicircle{7.85714285714}{0}{2.14285714286}
\semicircle{0.918367346939}{0}{0.918367346939}
\semicircle{6.63265306122}{0}{0.918367346939}
\semicircle{3.36734693878}{0}{0.918367346939}
\semicircle{9.08163265306}{0}{0.918367346939}
\semicircle{0.393586005831}{0}{0.393586005831}
\semicircle{6.10787172012}{0}{0.393586005831}
\semicircle{2.84256559767}{0}{0.393586005831}
\semicircle{8.55685131195}{0}{0.393586005831}
\semicircle{1.44314868805}{0}{0.393586005831}
\semicircle{7.15743440233}{0}{0.393586005831}
\semicircle{3.89212827988}{0}{0.393586005831}
\semicircle{9.60641399417}{0}{0.393586005831}
\semicircle{0.168679716785}{0}{0.168679716785}
\semicircle{5.88296543107}{0}{0.168679716785}
\semicircle{2.61765930862}{0}{0.168679716785}
\semicircle{8.33194502291}{0}{0.168679716785}
\semicircle{1.218242399}{0}{0.168679716785}
\semicircle{6.93252811329}{0}{0.168679716785}
\semicircle{3.66722199084}{0}{0.168679716785}
\semicircle{9.38150770512}{0}{0.168679716785}
\semicircle{0.618492294877}{0}{0.168679716785}
\semicircle{6.33277800916}{0}{0.168679716785}
\semicircle{3.06747188671}{0}{0.168679716785}
\semicircle{8.781757601}{0}{0.168679716785}
\semicircle{1.66805497709}{0}{0.168679716785}
\semicircle{7.38234069138}{0}{0.168679716785}
\semicircle{4.11703456893}{0}{0.168679716785}
\semicircle{9.83132028322}{0}{0.168679716785}
\semicircle{0.0722913071934}{0}{0.0722913071934}
\semicircle{5.78657702148}{0}{0.0722913071934}
\semicircle{2.52127089903}{0}{0.0722913071934}
\semicircle{8.23555661332}{0}{0.0722913071934}
\semicircle{1.12185398941}{0}{0.0722913071934}
\semicircle{6.83613970369}{0}{0.0722913071934}
\semicircle{3.57083358125}{0}{0.0722913071934}
\semicircle{9.28511929553}{0}{0.0722913071934}
\semicircle{0.522103885286}{0}{0.0722913071934}
\semicircle{6.23638959957}{0}{0.0722913071934}
\semicircle{2.97108347712}{0}{0.0722913071934}
\semicircle{8.68536919141}{0}{0.0722913071934}
\semicircle{1.5716665675}{0}{0.0722913071934}
\semicircle{7.28595228179}{0}{0.0722913071934}
\semicircle{4.02064615934}{0}{0.0722913071934}
\semicircle{9.73493187362}{0}{0.0722913071934}
\semicircle{0.265068126376}{0}{0.0722913071934}
\semicircle{5.97935384066}{0}{0.0722913071934}
\semicircle{2.71404771821}{0}{0.0722913071934}
\semicircle{8.4283334325}{0}{0.0722913071934}
\semicircle{1.31463080859}{0}{0.0722913071934}
\semicircle{7.02891652288}{0}{0.0722913071934}
\semicircle{3.76361040043}{0}{0.0722913071934}
\semicircle{9.47789611471}{0}{0.0722913071934}
\semicircle{0.714880704468}{0}{0.0722913071934}
\semicircle{6.42916641875}{0}{0.0722913071934}
\semicircle{3.16386029631}{0}{0.0722913071934}
\semicircle{8.87814601059}{0}{0.0722913071934}
\semicircle{1.76444338668}{0}{0.0722913071934}
\semicircle{7.47872910097}{0}{0.0722913071934}
\semicircle{4.21342297852}{0}{0.0722913071934}
\semicircle{9.92770869281}{0}{0.0722913071934}
\semicircle{0.0309819887972}{0}{0.0309819887972}
\semicircle{5.74526770308}{0}{0.0309819887972}
\semicircle{2.47996158063}{0}{0.0309819887972}
\semicircle{8.19424729492}{0}{0.0309819887972}
\semicircle{1.08054467101}{0}{0.0309819887972}
\semicircle{6.7948303853}{0}{0.0309819887972}
\semicircle{3.52952426285}{0}{0.0309819887972}
\semicircle{9.24380997714}{0}{0.0309819887972}
\semicircle{0.48079456689}{0}{0.0309819887972}
\semicircle{6.19508028118}{0}{0.0309819887972}
\semicircle{2.92977415873}{0}{0.0309819887972}
\semicircle{8.64405987301}{0}{0.0309819887972}
\semicircle{1.53035724911}{0}{0.0309819887972}
\semicircle{7.24464296339}{0}{0.0309819887972}
\semicircle{3.97933684094}{0}{0.0309819887972}
\semicircle{9.69362255523}{0}{0.0309819887972}
\semicircle{0.22375880798}{0}{0.0309819887972}
\semicircle{5.93804452227}{0}{0.0309819887972}
\semicircle{2.67273839982}{0}{0.0309819887972}
\semicircle{8.3870241141}{0}{0.0309819887972}
\semicircle{1.2733214902}{0}{0.0309819887972}
\semicircle{6.98760720448}{0}{0.0309819887972}
\semicircle{3.72230108203}{0}{0.0309819887972}
\semicircle{9.43658679632}{0}{0.0309819887972}
\semicircle{0.673571386072}{0}{0.0309819887972}
\semicircle{6.38785710036}{0}{0.0309819887972}
\semicircle{3.12255097791}{0}{0.0309819887972}
\semicircle{8.83683669219}{0}{0.0309819887972}
\semicircle{1.72313406829}{0}{0.0309819887972}
\semicircle{7.43741978257}{0}{0.0309819887972}
\semicircle{4.17211366012}{0}{0.0309819887972}
\semicircle{9.88639937441}{0}{0.0309819887972}
\semicircle{0.11360062559}{0}{0.0309819887972}
\semicircle{5.82788633988}{0}{0.0309819887972}
\semicircle{2.56258021743}{0}{0.0309819887972}
\semicircle{8.27686593171}{0}{0.0309819887972}
\semicircle{1.16316330781}{0}{0.0309819887972}
\semicircle{6.87744902209}{0}{0.0309819887972}
\semicircle{3.61214289964}{0}{0.0309819887972}
\semicircle{9.32642861393}{0}{0.0309819887972}
\semicircle{0.563413203682}{0}{0.0309819887972}
\semicircle{6.27769891797}{0}{0.0309819887972}
\semicircle{3.01239279552}{0}{0.0309819887972}
\semicircle{8.7266785098}{0}{0.0309819887972}
\semicircle{1.6129758859}{0}{0.0309819887972}
\semicircle{7.32726160018}{0}{0.0309819887972}
\semicircle{4.06195547773}{0}{0.0309819887972}
\semicircle{9.77624119202}{0}{0.0309819887972}
\semicircle{0.306377444772}{0}{0.0309819887972}
\semicircle{6.02066315906}{0}{0.0309819887972}
\semicircle{2.75535703661}{0}{0.0309819887972}
\semicircle{8.46964275089}{0}{0.0309819887972}
\semicircle{1.35594012699}{0}{0.0309819887972}
\semicircle{7.07022584127}{0}{0.0309819887972}
\semicircle{3.80491971882}{0}{0.0309819887972}
\semicircle{9.51920543311}{0}{0.0309819887972}
\semicircle{0.756190022865}{0}{0.0309819887972}
\semicircle{6.47047573715}{0}{0.0309819887972}
\semicircle{3.2051696147}{0}{0.0309819887972}
\semicircle{8.91945532899}{0}{0.0309819887972}
\semicircle{1.80575270508}{0}{0.0309819887972}
\semicircle{7.52003841937}{0}{0.0309819887972}
\semicircle{4.25473229692}{0}{0.0309819887972}
\semicircle{9.9690180112}{0}{0.0309819887972}

\end{tikzpicture}
\end{center}

(b) More generally, for any compact, connected  metrizable graph-like space $X$, we see that $H(X)$ with the compact-open topology is the minimum group topology.
\end{eg}

From (a) we deduce a general result. 

\begin{lem}\label{lotsExs}
For every compact metric space $K$ there is a compact, metric connected $X=X_K$ such that $O_X$ is dense,  $C_X=K$, and $\tau_{k|C}=\tau_k$. 
\end{lem}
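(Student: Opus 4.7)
My plan is to construct $X_K$ as a quotient of the Cantor bouquet $B$ of Example~\ref{CantorB}(a), exploiting the classical fact that every compact metric space $K$ is a continuous image of the Cantor set $K_0$. Fix a continuous surjection $q:K_0\to K$, and let $X_K=B/{\sim}$, where $x\sim y$ iff either $x=y$ or $\{x,y\}\subseteq K_0$ with $q(x)=q(y)$. The graph of $\sim$ is closed in $B\times B$ (it is the diagonal together with the closed set $(q\times q)^{-1}(\Delta_K)\cap(K_0\times K_0)$), so $X_K$ is compact Hausdorff; a countable base of saturated opens in $B$ gives second countability, hence metrizability; and $X_K$ is connected as a continuous image of the connected $B$. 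The quotient contains a copy of $K=K_0/{\sim}$ as a closed subspace.

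The elementary properties follow directly. Interiors of the projected semicircles are disjoint from $K$ and are open copies of $(0,1)$; their union was dense in $B$ and remains dense in $X_K$, so $O_{X_K}$ is dense. At each $c\in K$, every preimage in $K_0$ has infinitely many semicircle endpoints accumulating at it within $B$, so $c$ has infinitely many arc-branches in $X_K$, forbidding any neighborhood homeomorphic to $(0,1)$; hence $C_{X_K}=K$.

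The main obstacle is verifying $\tau_{k|C}=\tau_k$, since Proposition~\ref{suff_tkceqtk}(a) does not apply when $K$ is positive-dimensional. My plan is to check condition~(3) of Theorem~\ref{thm:taukC} directly. Given $\epsilon>0$, partition the arcs of $X_K$ into finitely many ``large'' arcs (diameter $\ge\eta$ for an appropriate threshold $\eta$) and the remaining countable family of ``small'' arcs, and choose $\delta>0$ so small that (i) each large arc has an interior subarc lying in $X_K\setminus C_{2\delta}$, and (ii) each small arc lies in an $(\epsilon/3)$-neighborhood of a single point of $K$. Any $h\in H(X_K)$ permutes the components of $O_{X_K}$ (the arc interiors) and preserves $K=C_{X_K}$; under the movement hypothesis of~(3), if $x\notin C_\delta$ then $h^{-1}(x)$ lies within $\delta$ of $x$ and hence cannot lie in an arc entirely contained in $C_\delta$. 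This rules out a small arc being mapped onto a large arc, so large arcs must map to large arcs and small arcs to small arcs. The finite analysis of the large list (in the style of the proof of Proposition~\ref{suff_tkceqtk}(a)) bounds the $h$-movement of each large-arc endpoint by $\epsilon$, while (ii) bounds the $h$-movement of each small-arc endpoint by $2\epsilon/3$; since arc endpoints are dense in $K$, continuity of $h$ then gives $d(h(c),c)\le\epsilon$ for every $c\in K$, as required. The delicate technical step is the annulus-based argument ruling out small-to-large crossings, which I plan to handle via a precise choice of $\delta$ relative to the finite large-arc list together with compactness of $X_K$.
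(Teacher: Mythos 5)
Your construction coincides with the paper's (quotient the Cantor bouquet by a continuous surjection of its Cantor set onto $K$), and the routine verifications (metrizability, connectedness, density of $O_X$, $C_{X_K}=K$) are fine. The gap is in the key step, the verification of condition (3) of Theorem~\ref{thm:taukC}. Your condition (ii) -- each small arc lies in an $\epsilon/3$-neighborhood of a single point of $K$ -- locates a small arc but says nothing about where its image lands, so the assertion that ``(ii) bounds the $h$-movement of each small-arc endpoint by $2\epsilon/3$'' is a non sequitur. A small arc wholly contained in $C_\delta$ could, for all your argument shows, be carried by $h$ onto a small arc attached near a distant point of $K$, dragging its endpoint (and, with it, nearby points of $K$) a distance $\ge\epsilon$ while every point outside $C_\delta$ moves by less than $\delta$. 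This cannot be dismissed by soft considerations: it is exactly the mechanism in the paper's later examples (the rational comb and its connected, locally connected variant), where $X$ is connected with $O_X$ dense and yet $\tau_{k|C}\ne\tau_k$ because whole pieces inside $C_\delta$ can be slid far while fixing everything outside $C_\delta$. So ruling out ``small onto large'' and controlling the finitely many large arcs does not finish the proof; controlling the small arcs and $h$ on $K$ itself is the heart of the matter, and your outline leaves it untouched. (Your appeal to the ``style'' of Proposition~\ref{suff_tkceqtk}(a) does not fill this in either: that proof rests on a clopen partition of a zero-dimensional $C$ into small pieces separated by the moat $X\setminus C_\delta$, which is unavailable here precisely because $K$ may be connected.)

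What a correct argument must use is the specific bouquet structure as the replacement for zero-dimensionality: for every $\epsilon$ there is $\eta$ such that the endpoints of the finitely many arcs of diameter $\ge\eta$ are $\epsilon/3$-dense in $K$ (images of the endpoints of the low-level basic intervals); for suitably small $\delta$ the hypothesis of (3) forces $h$ to map each such arc onto itself and hence to (essentially) fix its endpoints; and one then needs a further argument -- absent from your proposal -- propagating this anchoring of the arc skeleton to a displacement bound for all points of $K$ and for the small arcs, since merely fixing a finite $\epsilon/3$-dense subset of $K$ does not bound the displacement of a homeomorphism of $K$ when $\dim K\ge 2$. The paper itself is laconic here (``modifying the argument for $Z$ easily shows\dots''), but the burden of your write-up was to supply that modification, and as it stands the proposal does not.
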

\begin{proof}
Let $Z$ be the space (illustrated above) from Example~\ref{CantorB}(a). Every compact metric space $K$ is the continuous image of the Cantor set. Applying this quotient to the copy of the Cantor set in $Z$ gives $X=X_K$. It is clear that $O_X$ is dense and $C_X=K$. Modifying the argument for $Z$ easily shows that $\tau_{k|C}=\tau_k$  for $X_K$. 
\end{proof}

\subsection{The Remaining Cases}
Let $X$ be compact metrizable, and suppose $O_X$ is dense in $X$. Let $C=C_X$.
From the preceding two Propositions we (essentially) know whether or not $\tau_{k|C} = \tau_k$ except in two cases: (i) when $S_X$ is empty, $C$ is zero dimensional but not a convergent sequence (or finite) and (ii) $S_X$ is empty, $C$ is not zero dimensional but every component $I$ in $I_X$ has closure either an arc or a circle. Of particular interest in case (ii) is when $C$ and/or $X$ is connected, and preferably locally connected.

In each case there do not seem to be simple conditions we can place on $X$, $C$ etc that allow us to determine if $\tau_{k|C}$ is, or is not, equal to $\tau_k$. We demonstrate this with some examples, starting with case (i).

\begin{eg} \ 

(a) Examples with $C$ two convergent sequences and $\tau_{k|C}\ne\tau_k$:

two convergent sequences of Hawaiian earrings (illustrated below); two convergent sequences of arcs; two convergent sequences of double circles.

\begin{center}

\begin{tikzpicture}

\draw[dotted,red,thick] (0,0) -- (0.9,0);

\draw[dotted,red,thick] (10,2.5) -- (9.1,2.5);

\draw[fill=blue] (0,0) circle [radius=0.075];

\draw[fill=blue] (10,2.5) circle [radius=0.075];

\draw[thick] (8.75,0) arc [start angle=180, end angle=540, radius=0.9375];
\draw[thick] (8.75,0) arc [start angle=180, end angle=540, radius=0.675];
\draw[thick] (8.75,0) arc [start angle=180, end angle=540, radius=0.4995];
\draw[thick] (8.75,0) arc [start angle=180, end angle=540, radius=0.375];
\draw[thick] (8.75,0) arc [start angle=180, end angle=540, radius=0.3];
\draw[thick] (8.75,0) arc [start angle=180, end angle=540, radius=0.2325];
\draw[thick] (8.75,0) arc [start angle=180, end angle=540, radius=0.16875];
\draw[thick] (8.75,0) arc [start angle=180, end angle=540, radius=0.1125];
\draw[thick] (6.5,0) arc [start angle=180, end angle=540, radius=0.625];
\draw[thick] (6.5,0) arc [start angle=180, end angle=540, radius=0.45];
\draw[thick] (6.5,0) arc [start angle=180, end angle=540, radius=0.333];
\draw[thick] (6.5,0) arc [start angle=180, end angle=540, radius=0.25];
\draw[thick] (6.5,0) arc [start angle=180, end angle=540, radius=0.2];
\draw[thick] (6.5,0) arc [start angle=180, end angle=540, radius=0.155];
\draw[thick] (6.5,0) arc [start angle=180, end angle=540, radius=0.1125];
\draw[thick] (6.5,0) arc [start angle=180, end angle=540, radius=0.075];
\draw[thick] (4.5,0) arc [start angle=180, end angle=540, radius=0.46875];
\draw[thick] (4.5,0) arc [start angle=180, end angle=540, radius=0.3375];
\draw[thick] (4.5,0) arc [start angle=180, end angle=540, radius=0.24975];
\draw[thick] (4.5,0) arc [start angle=180, end angle=540, radius=0.1875];
\draw[thick] (4.5,0) arc [start angle=180, end angle=540, radius=0.15];
\draw[thick] (4.5,0) arc [start angle=180, end angle=540, radius=0.11625];
\draw[thick] (4.5,0) arc [start angle=180, end angle=540, radius=0.084375];
\draw[thick] (4.5,0) arc [start angle=180, end angle=540, radius=0.05625];
\draw[thick] (2.5,0) arc [start angle=180, end angle=540, radius=0.3125];
\draw[thick] (2.5,0) arc [start angle=180, end angle=540, radius=0.225];
\draw (2.5,0) arc [start angle=180, end angle=540, radius=0.1665];
\draw (2.5,0) arc [start angle=180, end angle=540, radius=0.125];
\draw (2.5,0) arc [start angle=180, end angle=540, radius=0.1];
\draw (2.5,0) arc [start angle=180, end angle=540, radius=0.0775];
\draw (2.5,0) arc [start angle=180, end angle=540, radius=0.05625];
\draw (2.5,0) arc [start angle=180, end angle=540, radius=0.0375];
\draw[thick] (1,0) arc [start angle=180, end angle=540, radius=0.15625];
\draw (1,0) arc [start angle=180, end angle=540, radius=0.1125];
\draw (1,0) arc [start angle=180, end angle=540, radius=0.08325];
\draw (1,0) arc [start angle=180, end angle=540, radius=0.0625];
\draw (1,0) arc [start angle=180, end angle=540, radius=0.05];
\draw (1,0) arc [start angle=180, end angle=540, radius=0.03875];

\draw[thick] (1.25,2.5) arc [start angle=0, end angle=360, radius=0.9375];
\draw[thick] (1.25,2.5) arc [start angle=0, end angle=360, radius=0.675];
\draw[thick] (1.25,2.5) arc [start angle=0, end angle=360, radius=0.4995];
\draw[thick] (1.25,2.5) arc [start angle=0, end angle=360, radius=0.375];
\draw[thick] (1.25,2.5) arc [start angle=0, end angle=360, radius=0.3];
\draw[thick] (1.25,2.5) arc [start angle=0, end angle=360, radius=0.2325];
\draw[thick] (1.25,2.5) arc [start angle=0, end angle=360, radius=0.16875];
\draw[thick] (1.25,2.5) arc [start angle=0, end angle=360, radius=0.1125];
\draw[thick] (3.5,2.5) arc [start angle=0, end angle=360, radius=0.625];
\draw[thick] (3.5,2.5) arc [start angle=0, end angle=360, radius=0.45];
\draw[thick] (3.5,2.5) arc [start angle=0, end angle=360, radius=0.333];
\draw[thick] (3.5,2.5) arc [start angle=0, end angle=360, radius=0.25];
\draw[thick] (3.5,2.5) arc [start angle=0, end angle=360, radius=0.2];
\draw[thick] (3.5,2.5) arc [start angle=0, end angle=360, radius=0.155];
\draw[thick] (3.5,2.5) arc [start angle=0, end angle=360, radius=0.1125];
\draw[thick] (3.5,2.5) arc [start angle=0, end angle=360, radius=0.075];
\draw[thick] (5.5,2.5) arc [start angle=0, end angle=360, radius=0.46875];
\draw[thick] (5.5,2.5) arc [start angle=0, end angle=360, radius=0.3375];
\draw[thick] (5.5,2.5) arc [start angle=0, end angle=360, radius=0.24975];
\draw[thick] (5.5,2.5) arc [start angle=0, end angle=360, radius=0.1875];
\draw[thick] (5.5,2.5) arc [start angle=0, end angle=360, radius=0.15];
\draw[thick] (5.5,2.5) arc [start angle=0, end angle=360, radius=0.11625];
\draw[thick] (5.5,2.5) arc [start angle=0, end angle=360, radius=0.084375];
\draw[thick] (5.5,2.5) arc [start angle=0, end angle=360, radius=0.05625];
\draw[thick] (7.5,2.5) arc [start angle=0, end angle=360, radius=0.3125];
\draw[thick] (7.5,2.5) arc [start angle=0, end angle=360, radius=0.225];
\draw (7.5,2.5) arc [start angle=0, end angle=360, radius=0.1665];
\draw (7.5,2.5) arc [start angle=0, end angle=360, radius=0.125];
\draw (7.5,2.5) arc [start angle=0, end angle=360, radius=0.1];
\draw (7.5,2.5) arc [start angle=0, end angle=360, radius=0.0775];
\draw (7.5,2.5) arc [start angle=0, end angle=360, radius=0.05625];
\draw (7.5,2.5) arc [start angle=0, end angle=360, radius=0.0375];
\draw[thick] (9,2.5) arc [start angle=0, end angle=360, radius=0.15625];
\draw (9,2.5) arc [start angle=0, end angle=360, radius=0.1125];
\draw (9,2.5) arc [start angle=0, end angle=360, radius=0.08325];
\draw (9,2.5) arc [start angle=0, end angle=360, radius=0.0625];
\draw (9,2.5) arc [start angle=0, end angle=360, radius=0.05];
\draw (9,2.5) arc [start angle=0, end angle=360, radius=0.03875];

\end{tikzpicture}

\end{center}

(a)${}'$ Examples with $C$ two convergent sequences but $\tau_{k|C} = \tau_k$:

a convergent sequence of $2n$-circles plus a convergent sequence of $2n+1$-circles; a convergent sequence of $2n$-ods plus a convergent sequence of $2n+1$-ods (illustrated below).

\begin{center}

\begin{tikzpicture}

\draw[dotted,red,thick] (0,0) -- (0.9,0);

\draw[fill=blue] (0,0)  circle (0.075);

\draw[dotted,red,thick] (10,2.5) -- (9.1,2.5);

\draw[fill=blue] (10,2.5)  circle (0.075);

\draw[thick] (1,2.5) --++(135.0:1.5);
\draw[thick] (1,2.5) --++(225.0:1.5);
\draw[thick] (1,2.5) --++(315.0:1.5);
\draw[thick] (1,2.5) --++(405.0:1.5);
\draw[thick] (3.5,2.5) --++(105.0:1);
\draw[thick] (3.5,2.5) --++(165.0:1);
\draw[thick] (3.5,2.5) --++(225.0:1);
\draw[thick] (3.5,2.5) --++(285.0:1);
\draw[thick] (3.5,2.5) --++(345.0:1);
\draw[thick] (3.5,2.5) --++(405.0:1);
\draw[thick] (5.5,2.5) --++(90.0:0.75);
\draw[thick] (5.5,2.5) --++(135.0:0.75);
\draw[thick] (5.5,2.5) --++(180.0:0.75);
\draw[thick] (5.5,2.5) --++(225.0:0.75);
\draw[thick] (5.5,2.5) --++(270.0:0.75);
\draw[thick] (5.5,2.5) --++(315.0:0.75);
\draw[thick] (5.5,2.5) --++(360.0:0.75);
\draw[thick] (5.5,2.5) --++(405.0:0.75);
\draw[thick] (7.5,2.5) --++(81.0:0.5);
\draw[thick] (7.5,2.5) --++(117.0:0.5);
\draw[thick] (7.5,2.5) --++(153.0:0.5);
\draw[thick] (7.5,2.5) --++(189.0:0.5);
\draw[thick] (7.5,2.5) --++(225.0:0.5);
\draw[thick] (7.5,2.5) --++(261.0:0.5);
\draw[thick] (7.5,2.5) --++(297.0:0.5);
\draw[thick] (7.5,2.5) --++(333.0:0.5);
\draw[thick] (7.5,2.5) --++(369.0:0.5);
\draw[thick] (7.5,2.5) --++(405.0:0.5);
\draw (9,2.5) --++(75.0:0.25);
\draw (9,2.5) --++(105.0:0.25);
\draw (9,2.5) --++(135.0:0.25);
\draw (9,2.5) --++(165.0:0.25);
\draw (9,2.5) --++(195.0:0.25);
\draw (9,2.5) --++(225.0:0.25);
\draw (9,2.5) --++(255.0:0.25);
\draw (9,2.5) --++(285.0:0.25);
\draw (9,2.5) --++(315.0:0.25);
\draw (9,2.5) --++(345.0:0.25);
\draw (9,2.5) --++(375.0:0.25);
\draw (9,2.5) --++(405.0:0.25);

\draw[thick] (8.75,0) --++(210.0:1.5);
\draw[thick] (8.75,0) --++(330.0:1.5);
\draw[thick] (8.75,0) --++(90.0:1.5);
\draw[thick] (6.5,0) --++(72.0:1);
\draw[thick] (6.5,0) --++(144.0:1);
\draw[thick] (6.5,0) --++(216.0:1);
\draw[thick] (6.5,0) --++(288.0:1);
\draw[thick] (6.5,0) --++(360.0:1);
\draw[thick] (4.5,0) --++(51.4285714286:0.75);
\draw[thick] (4.5,0) --++(102.857142857:0.75);
\draw[thick] (4.5,0) --++(154.285714286:0.75);
\draw[thick] (4.5,0) --++(205.714285714:0.75);
\draw[thick] (4.5,0) --++(257.142857143:0.75);
\draw[thick] (4.5,0) --++(308.571428571:0.75);
\draw[thick] (4.5,0) --++(360.0:0.75);
\draw[thick] (2.5,0) --++(40.0:0.5);
\draw[thick] (2.5,0) --++(80.0:0.5);
\draw[thick] (2.5,0) --++(120.0:0.5);
\draw[thick] (2.5,0) --++(160.0:0.5);
\draw[thick] (2.5,0) --++(200.0:0.5);
\draw[thick] (2.5,0) --++(240.0:0.5);
\draw[thick] (2.5,0) --++(280.0:0.5);
\draw[thick] (2.5,0) --++(320.0:0.5);
\draw[thick] (2.5,0) --++(360.0:0.5);
\draw (1,0) --++(32.7272727273:0.25);
\draw (1,0) --++(65.4545454545:0.25);
\draw (1,0) --++(98.1818181818:0.25);
\draw (1,0) --++(130.909090909:0.25);
\draw (1,0) --++(163.636363636:0.25);
\draw (1,0) --++(196.363636364:0.25);
\draw (1,0) --++(229.090909091:0.25);
\draw (1,0) --++(261.818181818:0.25);
\draw (1,0) --++(294.545454545:0.25);
\draw (1,0) --++(327.272727273:0.25);
\draw (1,0) --++(360.0:0.25);

\end{tikzpicture}
\end{center}
\end{eg}

Similar examples exist when $C$ is the Cantor set.

So the remaining interesting case is when $X$ and $C$ are connected, $S_X$ is empty and every $I$ in $I_X$ has closure an arc or a circle.

\begin{eg} \ Examples where $\tau_{k|C} = \tau_k$.

(a) Bouquet of circles over closed interval. In this case both $X$ and $C$ are connected and locally connected.

(b) More generally, apply Lemma~\ref{lotsExs} to any continuum $K$ to get many examples with both $X$ and $C$ connected.
\end{eg}

We conclude with two examples when $\tau_{k|C} \ne \tau_k$. Both are such that $X$ is connected, and the second is additionally locally connected. We include both as the first example is a stepping stone to the second. 
\begin{eg} Path connected but not locally connected example where $\tau_{k|C} \ne \tau_k$.

\begin{center}
\begin{tikzpicture} 

\draw[thick] (10.0,0.0) -- (10.0,4.0);
\draw [thick] (10.0,0.0) -- (9.875,0.0);
\draw [thick] (10.0,0.125) -- (9.875,0.125);
\draw [thick] (10.0,0.25) -- (9.75,0.25);
\draw [thick] (10.0,0.375) -- (9.875,0.375);
\draw [thick] (10.0,0.5) -- (9.5,0.5);
\draw [thick] (10.0,0.625) -- (9.875,0.625);
\draw [thick] (10.0,0.75) -- (9.75,0.75);
\draw [thick] (10.0,0.875) -- (9.875,0.875);
\draw [thick] (10.0,1.0) -- (9.0,1.0);
\draw [thick] (10.0,1.125) -- (9.875,1.125);
\draw [thick] (10.0,1.25) -- (9.75,1.25);
\draw [thick] (10.0,1.375) -- (9.875,1.375);
\draw [thick] (10.0,1.5) -- (9.5,1.5);
\draw [thick] (10.0,1.625) -- (9.875,1.625);
\draw [thick] (10.0,1.75) -- (9.75,1.75);
\draw [thick] (10.0,1.875) -- (9.875,1.875);
\draw [thick] (10.0,2.0) -- (8.0,2.0);
\draw [thick] (10.0,2.125) -- (9.875,2.125);
\draw [thick] (10.0,2.25) -- (9.75,2.25);
\draw [thick] (10.0,2.375) -- (9.875,2.375);
\draw [thick] (10.0,2.5) -- (9.5,2.5);
\draw [thick] (10.0,2.625) -- (9.875,2.625);
\draw [thick] (10.0,2.75) -- (9.75,2.75);
\draw [thick] (10.0,2.875) -- (9.875,2.875);
\draw [thick] (10.0,3.0) -- (9.0,3.0);
\draw [thick] (10.0,3.125) -- (9.875,3.125);
\draw [thick] (10.0,3.25) -- (9.75,3.25);
\draw [thick] (10.0,3.375) -- (9.875,3.375);
\draw [thick] (10.0,3.5) -- (9.5,3.5);
\draw [thick] (10.0,3.625) -- (9.875,3.625);
\draw [thick] (10.0,3.75) -- (9.75,3.75);
\draw [thick] (10.0,3.875) -- (9.875,3.875);
\draw [thick] (10.0,4.0) -- (9.875,4.0);
\draw[thick] (7.25,0.0) -- (7.25,4.0);
\draw [thick] (7.25,0.0) -- (7.15625,0.0);
\draw [thick] (7.25,0.125) -- (7.15625,0.125);
\draw [thick] (7.25,0.25) -- (7.0625,0.25);
\draw [thick] (7.25,0.375) -- (7.15625,0.375);
\draw [thick] (7.25,0.5) -- (6.875,0.5);
\draw [thick] (7.25,0.625) -- (7.15625,0.625);
\draw [thick] (7.25,0.75) -- (7.0625,0.75);
\draw [thick] (7.25,0.875) -- (7.15625,0.875);
\draw [thick] (7.25,1.0) -- (6.5,1.0);
\draw [thick] (7.25,1.125) -- (7.15625,1.125);
\draw [thick] (7.25,1.25) -- (7.0625,1.25);
\draw [thick] (7.25,1.375) -- (7.15625,1.375);
\draw [thick] (7.25,1.5) -- (6.875,1.5);
\draw [thick] (7.25,1.625) -- (7.15625,1.625);
\draw [thick] (7.25,1.75) -- (7.0625,1.75);
\draw [thick] (7.25,1.875) -- (7.15625,1.875);
\draw [thick] (7.25,2.0) -- (5.75,2.0);
\draw [thick] (7.25,2.125) -- (7.15625,2.125);
\draw [thick] (7.25,2.25) -- (7.0625,2.25);
\draw [thick] (7.25,2.375) -- (7.15625,2.375);
\draw [thick] (7.25,2.5) -- (6.875,2.5);
\draw [thick] (7.25,2.625) -- (7.15625,2.625);
\draw [thick] (7.25,2.75) -- (7.0625,2.75);
\draw [thick] (7.25,2.875) -- (7.15625,2.875);
\draw [thick] (7.25,3.0) -- (6.5,3.0);
\draw [thick] (7.25,3.125) -- (7.15625,3.125);
\draw [thick] (7.25,3.25) -- (7.0625,3.25);
\draw [thick] (7.25,3.375) -- (7.15625,3.375);
\draw [thick] (7.25,3.5) -- (6.875,3.5);
\draw [thick] (7.25,3.625) -- (7.15625,3.625);
\draw [thick] (7.25,3.75) -- (7.0625,3.75);
\draw [thick] (7.25,3.875) -- (7.15625,3.875);
\draw [thick] (7.25,4.0) -- (7.15625,4.0);
\draw[thick] (5.25,0.0) -- (5.25,4.0);
\draw [thick] (5.25,0.0) -- (5.1875,0.0);
\draw [thick] (5.25,0.125) -- (5.1875,0.125);
\draw [thick] (5.25,0.25) -- (5.125,0.25);
\draw [thick] (5.25,0.375) -- (5.1875,0.375);
\draw [thick] (5.25,0.5) -- (5.0,0.5);
\draw [thick] (5.25,0.625) -- (5.1875,0.625);
\draw [thick] (5.25,0.75) -- (5.125,0.75);
\draw [thick] (5.25,0.875) -- (5.1875,0.875);
\draw [thick] (5.25,1.0) -- (4.75,1.0);
\draw [thick] (5.25,1.125) -- (5.1875,1.125);
\draw [thick] (5.25,1.25) -- (5.125,1.25);
\draw [thick] (5.25,1.375) -- (5.1875,1.375);
\draw [thick] (5.25,1.5) -- (5.0,1.5);
\draw [thick] (5.25,1.625) -- (5.1875,1.625);
\draw [thick] (5.25,1.75) -- (5.125,1.75);
\draw [thick] (5.25,1.875) -- (5.1875,1.875);
\draw [thick] (5.25,2.0) -- (4.25,2.0);
\draw [thick] (5.25,2.125) -- (5.1875,2.125);
\draw [thick] (5.25,2.25) -- (5.125,2.25);
\draw [thick] (5.25,2.375) -- (5.1875,2.375);
\draw [thick] (5.25,2.5) -- (5.0,2.5);
\draw [thick] (5.25,2.625) -- (5.1875,2.625);
\draw [thick] (5.25,2.75) -- (5.125,2.75);
\draw [thick] (5.25,2.875) -- (5.1875,2.875);
\draw [thick] (5.25,3.0) -- (4.75,3.0);
\draw [thick] (5.25,3.125) -- (5.1875,3.125);
\draw [thick] (5.25,3.25) -- (5.125,3.25);
\draw [thick] (5.25,3.375) -- (5.1875,3.375);
\draw [thick] (5.25,3.5) -- (5.0,3.5);
\draw [thick] (5.25,3.625) -- (5.1875,3.625);
\draw [thick] (5.25,3.75) -- (5.125,3.75);
\draw [thick] (5.25,3.875) -- (5.1875,3.875);
\draw [thick] (5.25,4.0) -- (5.1875,4.0);
\draw[thick] (3.5,0.0) -- (3.5,4.0);
\draw [thick] (3.5,0.0) -- (3.453125,0.0);
\draw [thick] (3.5,0.125) -- (3.453125,0.125);
\draw [thick] (3.5,0.25) -- (3.40625,0.25);
\draw [thick] (3.5,0.375) -- (3.453125,0.375);
\draw [thick] (3.5,0.5) -- (3.3125,0.5);
\draw [thick] (3.5,0.625) -- (3.453125,0.625);
\draw [thick] (3.5,0.75) -- (3.40625,0.75);
\draw [thick] (3.5,0.875) -- (3.453125,0.875);
\draw [thick] (3.5,1.0) -- (3.125,1.0);
\draw [thick] (3.5,1.125) -- (3.453125,1.125);
\draw [thick] (3.5,1.25) -- (3.40625,1.25);
\draw [thick] (3.5,1.375) -- (3.453125,1.375);
\draw [thick] (3.5,1.5) -- (3.3125,1.5);
\draw [thick] (3.5,1.625) -- (3.453125,1.625);
\draw [thick] (3.5,1.75) -- (3.40625,1.75);
\draw [thick] (3.5,1.875) -- (3.453125,1.875);
\draw [thick] (3.5,2.0) -- (2.75,2.0);
\draw [thick] (3.5,2.125) -- (3.453125,2.125);
\draw [thick] (3.5,2.25) -- (3.40625,2.25);
\draw [thick] (3.5,2.375) -- (3.453125,2.375);
\draw [thick] (3.5,2.5) -- (3.3125,2.5);
\draw [thick] (3.5,2.625) -- (3.453125,2.625);
\draw [thick] (3.5,2.75) -- (3.40625,2.75);
\draw [thick] (3.5,2.875) -- (3.453125,2.875);
\draw [thick] (3.5,3.0) -- (3.125,3.0);
\draw [thick] (3.5,3.125) -- (3.453125,3.125);
\draw [thick] (3.5,3.25) -- (3.40625,3.25);
\draw [thick] (3.5,3.375) -- (3.453125,3.375);
\draw [thick] (3.5,3.5) -- (3.3125,3.5);
\draw [thick] (3.5,3.625) -- (3.453125,3.625);
\draw [thick] (3.5,3.75) -- (3.40625,3.75);
\draw [thick] (3.5,3.875) -- (3.453125,3.875);
\draw [thick] (3.5,4.0) -- (3.453125,4.0);
\draw[thick] (2.1,0.0) -- (2.1,4.0);
\draw [thick] (2.1,0.0) -- (2.06875,0.0);
\draw [thick] (2.1,0.125) -- (2.06875,0.125);
\draw [thick] (2.1,0.25) -- (2.0375,0.25);
\draw [thick] (2.1,0.375) -- (2.06875,0.375);
\draw [thick] (2.1,0.5) -- (1.975,0.5);
\draw [thick] (2.1,0.625) -- (2.06875,0.625);
\draw [thick] (2.1,0.75) -- (2.0375,0.75);
\draw [thick] (2.1,0.875) -- (2.06875,0.875);
\draw [thick] (2.1,1.0) -- (1.85,1.0);
\draw [thick] (2.1,1.125) -- (2.06875,1.125);
\draw [thick] (2.1,1.25) -- (2.0375,1.25);
\draw [thick] (2.1,1.375) -- (2.06875,1.375);
\draw [thick] (2.1,1.5) -- (1.975,1.5);
\draw [thick] (2.1,1.625) -- (2.06875,1.625);
\draw [thick] (2.1,1.75) -- (2.0375,1.75);
\draw [thick] (2.1,1.875) -- (2.06875,1.875);
\draw [thick] (2.1,2.0) -- (1.6,2.0);
\draw [thick] (2.1,2.125) -- (2.06875,2.125);
\draw [thick] (2.1,2.25) -- (2.0375,2.25);
\draw [thick] (2.1,2.375) -- (2.06875,2.375);
\draw [thick] (2.1,2.5) -- (1.975,2.5);
\draw [thick] (2.1,2.625) -- (2.06875,2.625);
\draw [thick] (2.1,2.75) -- (2.0375,2.75);
\draw [thick] (2.1,2.875) -- (2.06875,2.875);
\draw [thick] (2.1,3.0) -- (1.85,3.0);
\draw [thick] (2.1,3.125) -- (2.06875,3.125);
\draw [thick] (2.1,3.25) -- (2.0375,3.25);
\draw [thick] (2.1,3.375) -- (2.06875,3.375);
\draw [thick] (2.1,3.5) -- (1.975,3.5);
\draw [thick] (2.1,3.625) -- (2.06875,3.625);
\draw [thick] (2.1,3.75) -- (2.0375,3.75);
\draw [thick] (2.1,3.875) -- (2.06875,3.875);
\draw [thick] (2.1,4.0) -- (2.06875,4.0);
\draw[thick] (1,0.0) -- (1,4.0);
\draw [thick] (1,0.0) -- (0.984375,0.0);
\draw [thick] (1,0.125) -- (0.984375,0.125);
\draw [thick] (1,0.25) -- (0.96875,0.25);
\draw [thick] (1,0.375) -- (0.984375,0.375);
\draw [thick] (1,0.5) -- (0.9375,0.5);
\draw [thick] (1,0.625) -- (0.984375,0.625);
\draw [thick] (1,0.75) -- (0.96875,0.75);
\draw [thick] (1,0.875) -- (0.984375,0.875);
\draw [thick] (1,1.0) -- (0.875,1.0);
\draw [thick] (1,1.125) -- (0.984375,1.125);
\draw [thick] (1,1.25) -- (0.96875,1.25);
\draw [thick] (1,1.375) -- (0.984375,1.375);
\draw [thick] (1,1.5) -- (0.9375,1.5);
\draw [thick] (1,1.625) -- (0.984375,1.625);
\draw [thick] (1,1.75) -- (0.96875,1.75);
\draw [thick] (1,1.875) -- (0.984375,1.875);
\draw [thick] (1,2.0) -- (0.75,2.0);
\draw [thick] (1,2.125) -- (0.984375,2.125);
\draw [thick] (1,2.25) -- (0.96875,2.25);
\draw [thick] (1,2.375) -- (0.984375,2.375);
\draw [thick] (1,2.5) -- (0.9375,2.5);
\draw [thick] (1,2.625) -- (0.984375,2.625);
\draw [thick] (1,2.75) -- (0.96875,2.75);
\draw [thick] (1,2.875) -- (0.984375,2.875);
\draw [thick] (1,3.0) -- (0.875,3.0);
\draw [thick] (1,3.125) -- (0.984375,3.125);
\draw [thick] (1,3.25) -- (0.96875,3.25);
\draw [thick] (1,3.375) -- (0.984375,3.375);
\draw [thick] (1,3.5) -- (0.9375,3.5);
\draw [thick] (1,3.625) -- (0.984375,3.625);
\draw [thick] (1,3.75) -- (0.96875,3.75);
\draw [thick] (1,3.875) -- (0.984375,3.875);
\draw [thick] (1,4.0) -- (0.984375,4.0);

\draw[dotted,red] (0.1,2) -- (0.65,2);

\draw[thick] (10,0) -- (0,0) -- (0,4) -- (10,4);

\end{tikzpicture}

\end{center}

\end{eg}

\begin{proof} Each vertical line with attached horizontal lines forms a `rational comb'. By shifting and scaling we can find a homeomorphism of the rational comb taking any given horizontal line to any other.

All the vertical lines are in $C$.  The height of the boundary rectangle is $1$ unit. Set $\epsilon = 1/2$. Then for any $\delta>0$, $C_\delta$ will contain one of the vertical lines distinct from the left edge, call it $V$, and all of the  horizontal lines attached to it (in other words, a complete rational comb). Now we can find a homeomorphism of the continuum which is the identity outside the $\delta$-neighborhood of the vertical line $V$, but inside that $\delta$-neighborhood moves the rational comb around so some point is moved a distance at least $1/2$ (in fact we can move things any distance $<1$). This establishes $\tau_{k|C} \ne \tau_k$.
\end{proof}
Note that we can modify the above example so that the horizontal `teeth' of the rational combs in fact point out of the page. And we can replace each tooth with a circle (an arbitrarily small deformation of the original line segment).

\begin{eg}
Connected and locally connected example with $\tau_{k|C}\ne \tau_k$.
\end{eg}

\begin{proof} We start by constructing the key building block, $B$, of the example. It is obtained by adjoining countably many circles (triangles in the illustration) to the unit square in the $x$-$y$ plane ($[0,1]^2 \times \{0\}$). The circles have radii converging to zero, and are tangent to the points in the unit square whose $x$ and $y$ co-ordinates are both dyadic rationals. See the illustration. Note that each line, $\{x\} \times [0,1] \times \{0\}$, where $x$ is a dyadic rational is homeomorphic to the variant of the rational comb described in the paragraph above.

The key property of our building block, $B$, is that we can find a homeomorphism $h_0$ of $B$ moving some point on the line $\{1/2\} \times I \times \{0\}$ a distance at least $1/2$ along that line, but which is the identity on the boundary of the unit square.  Note also that $B$ is a subset of the triangular cylinder (dotted in the diagram), and $C_B$ is the unit square.  
\begin{center}
\begin{tikzpicture}[x={(0.7cm,-0.3cm)},y={(0.385cm,0.385cm)},z={(0cm,1cm)}]

\draw[->] (0,0,2) -- ++ (1,0,0);

\node at (1.2,0,2) {$x$};

\draw[->] (0,0,2) -- ++(0,1,0);

\node at (0,1.4,2) {$y$};

\draw[->] (0,0,2) -- ++(0,0,1);

\node at (0,0,3.2) {$z$};

\draw[fill=blue!20] (0,0) -- (0,10) -- (10,10) -- (10,0)--cycle;

\draw[very thin] (5,0,0) -- (5,10,0);

\draw[very thin] (2.5,0,0) -- (2.5,10,0);

\draw[very thin] (7.5,0,0) -- (7.5,10,0);

\draw[dotted,thick] (0,0,0) -- (5,0,3.5) -- (10,0,0);

\draw[dotted,thick] (0,10,0) -- (5,10,3.5) -- (10,10,0);

\draw[dotted,thick] (5,0,3.5) -- (5,10,3.5);

\draw[thick] (5,5,0) -- (5,4,3.5) -- (5,6,3.5) -- (5,5,0);

\draw[thick] (5,2.5,0) -- (5,2,1.75) -- (5,3,1.75) -- (5,2.5,0);
\draw[thick] (5,7.5,0) -- (5,7,1.75) -- (5,8,1.75) -- (5,7.5,0);
\draw[thick] (2.5,5,0) -- (2.5,4.5,1.75) -- (2.5,5.5,1.75) -- (2.5,5,0);
\draw[thick] (7.5,5,0) -- (7.5,4.5,1.75) -- (7.5,5.5,1.75) -- (7.5,5,0);

\draw[thick] (5,1.25,0) -- (5,1,0.875)--(5,1.5,0.875) -- cycle;;
\draw[thick] (5,3.75,0) -- (5,3.5,0.875)--(5,4,0.875) -- cycle;;
\draw[thick] (5,6.25,0) -- (5,6,0.875)--(5,6.5,0.875) -- cycle;;
\draw[thick] (5,8.75,0) -- (5,8.5,0.875)--(5,9,0.875) -- cycle;;

\draw[thick] (2.5,2.5,0) -- (2.5,2.25,0.875) -- (2.5,2.75,0.875) -- (2.5,2.5,0);
\draw[thick] (2.5,7.5,0) -- (2.5,7.25,0.875) -- (2.5,7.75,0.875) -- (2.5,7.5,0);

\draw[thick] (7.5,2.5,0) -- (7.5,2.25,0.875) -- (7.5,2.75,0.875) -- (7.5,2.5,0);
\draw[thick] (7.5,7.5,0) -- (7.5,7.25,0.875) -- (7.5,7.75,0.875) -- (7.5,7.5,0);

\draw[thick] (8.75,5,0) -- (8.75,4.75,0.875) -- (8.75,5.25,0.875) -- (8.75,5,0);

\draw[thick] (1.25,5,0) -- (1.25,4.75,0.875) -- (1.25,5.25,0.875) -- (1.25,5,0);
\end{tikzpicture}
\end{center}

To obtain the example $X$ start with the unit square in the $x,y$-plane. Take countably many copies, $B_n$, of $B$. Scale (in the $x$ and $z$ directions only) and translate the $B_n$'s to form a sequence, with heights shrinking to zero, converging to the left edge, $I \times \{0,0)\}$ of the unit square.
\begin{center}
\begin{tikzpicture}[x={(0.7cm,-0.3cm)},y={(0.385cm,0.385cm)},z={(0cm,1cm)}]

\draw[fill=blue!20,thick] (0,0) -- (0,10) -- (10,10) -- (10,0)--cycle;

\draw[thick] (7.25,0,0) -- (7.25,10,0);

\draw[dotted,thick] (7.25,0,0) -- (8.75,0,3.5) -- (10,0,0);
\draw[dotted,thick] (7.25,10,0) -- (8.75,10,3.5) -- (10,10,0);
\draw[dotted,thick] (8.75,0,3.5) -- (8.75,10,3.5);

\draw[thick] (7.25,0,0) -- (7.25,10,0);

\draw[dotted,thick] (5.25,0,0) -- (6.25,0,2) -- (7.25,0,0);
\draw[dotted,thick] (5.25,10,0) -- (6.25,10,2) -- (7.25,10,0);
\draw[dotted,thick] (6.25,0,2) -- (6.25,10,2);

\draw[thick] (5.25,0,0) -- (5.25,10,0);

\draw[dotted,thick] (3.5,0,0) -- (4.375,0,1.5) -- (5.25,0,0);
\draw[dotted,thick] (3.5,10,0) -- (4.375,10,1.5) -- (5.25,10,0);
\draw[dotted,thick] (4.375,0,1.5) -- (4.375,10,1.5);

\draw  (3.5,0,0) -- (3.5,10,0);

\draw[dotted,thick] (2.1,0,0) -- (2.8,0,1) -- (3.5,0,0);
\draw[dotted,thick] (2.1,10,0) -- (2.8,10,1) -- (3.5,10,0);
\draw[dotted,thick] (2.8,0,1) -- (2.8,10,1);

\draw  (2.1,0,0) -- (2.1,10,0);

\draw[dotted,thick] (1,0,0) -- (1.55,0,0.75) -- (2.1,0,0);
\draw[dotted,thick] (1,10,0) -- (1.55,10,0.75) -- (2.1,10,0);
\draw[dotted,thick] (1.55,0,0.75) -- (1.55,10,0.75);

\draw  (1,0,0) -- (1,10,0);

\draw[dotted,red,thick] (0,5,0) -- (1,5,0);

\end{tikzpicture}
\end{center}
This space  $X$ has the requisite properties. It is connected and locally connected. The set $C_X$ is the unit square in the $x,y$-plane. 
While the proof that $\tau_{k|C}=\tau_k$ is similar to that for the previous example. 

Indeed let $\epsilon=1/4$. Then for any $\delta >0$, we can find a copy of $B$ completely contained in the $\delta$-neighborhood of $C$. Extend the homeomorphism $h_0$ of $B$ over all of $X$ by making it the identity outside $B$. Then this extended homeomorphism, $h_0$, moves nothing outside $C_\delta$, but moves -- in the $y$-direction only -- at least one point a distance $\ge \epsilon$.
\end{proof}

\section{Automorphism Groups}

Let $M$ be a countable model of a first order theory. Then its automorphism group, $\mathop{Aut} (M)$, considered as a topological subgroup of $S(M)$ (the group of all permutations of $M$, with the topology of pointwise convergence) is a closed subgroup, and hence Polish. Conversely, every closed subgroup of $S(\N)$ can be identified as the automorphism group of a countable model of a first order theory. If a theory is $\aleph_0$-categorical -- has a unique (up to isomorphism) countable model -- $M$, say, then $\mathop{Aut}(M)$ is said to be oligomorphic. Equivalently, a closed subgroup of $S(\N)$ is oligomorphic if and only if for each $n \ge 1$ its natural action on $\N^n$ has finitely many orbits. We recall that a subgroup $G$ of $S(\N)$ is highly homogeneous if for any two finite subsets $A,B$ of $\N$ of the same size, there is a $g$ in $G$ such that $g(A)=B$. 

\subsection*{Oligomorphic Groups with and without a Minimum}

Question~2.3 of \cite{DM} asks which oligomorphic groups have a minimum group topology, mentioning $S(\N)$, the automorphism group of
the countable dense linear order, and the homeomorphism group of the Cantor space, in particular. We present a reasonably broad answer to this question, encompassing the mentioned groups.

Let $Q=\mathbb{Q} \cap (0,1)$. Then $\mathop{Aut}(\mathbb{Q},<)$ (the automorphism group of the countable dense linear order) and $\mathop{Aut}(Q,<)$ are isomorphic. This latter group naturally embeds, say as $G$, in $H([0,1])$ (perhaps this is most clear if we think of elements of $[0,1]$ as Dedekind cuts of $Q$). Further $G$ clearly satisfies the `locally non-trivial' condition of Theorem~\ref{thm:taukc_min_GEN}. Hence ($G$ and its isomorph) $\mathop{Aut}(Q,<)$ has the minimum Hausdorff group topology, $\tau_m$. It is not difficult to see that in this case $\tau_m$ has the following sets as a neighborhood of the identity: $B_\epsilon = \{\alpha \in \mathop{Aut}(Q,<) : \forall x \in Q \ d(\alpha (x),x) < \epsilon\}$, where $d$ is the usual metric on $\R$. No set of this form, $B_\epsilon$, is a subset of $\{\alpha \in \mathop{Aut}(Q,<) : \alpha(1/2)=1/2\}$ which is open in the pointwise topology, $\tau_p$, on $\mathop{Aut}(Q,<)$. Hence $\tau_m$ is strictly contained in $\tau_p$, and so $\tau_p$ is not minimal.

Let $S^1_Q$ -- the `rational circle' -- be any countable dense subset of the circle, $S^1$. Similar considerations apply to the following oligomorphic groups:  all order preserving or order reversing bijections of $Q$, all bijections of the rational circle  which preserve the cyclic order and  all bijections of the rational circle which preserve or reverse the cyclic order. They are all oligomorphic. They all embed either in $H(I)$ or $H(S^1)$, satisfy the `locally non-trivial' condition of Theorem~\ref{thm:taukc_min_GEN}, and so  they have a minimum Hausdorff group topology which is easily seen to be strictly finer than the pointwise topology.

Recall that for $S(\N)$ the topology of pointwise convergence is the minimum Hausdorff group topology \cite{Gau}, and note $S(\N)$ is the only automorphism group which is transitive.

Cameron showed \cite{Cam1} that the highly homogeneous non-transitive  automorphism groups as precisely those automorphism groups listed above. Hence we can summarize the above observations as follows.

\begin{thm}\label{hom_min}
Every highly homogeneous (oligomorphic) automorphism group $G$
has a minimum group topology.

That minimum group topology is strictly coarser than $\tau_p$, except when $G = S(\N)$, and so $\tau_p$ is not minimal.
\end{thm}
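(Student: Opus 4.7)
My plan is to reduce Theorem~\ref{hom_min} to a case analysis using Cameron's classification of the highly homogeneous permutation groups of $\N$, which asserts that up to isomorphism such a group is one of: $S(\N)$, $\mathop{Aut}(Q,<)$, the order preserving or reversing bijections of $Q$, the cyclic-order preserving bijections of the rational circle $S^1_Q$, or the cyclic-order preserving or reversing bijections of $S^1_Q$. Since there are only finitely many cases, it suffices to handle each individually.

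For $G=S(\N)$, Gaughan's theorem \cite{Gau} already asserts that $\tau_p$ is the minimum Hausdorff group topology, and by its transitivity $\tau_p$ is trivially minimal on itself, so the exceptional clause of the theorem is justified. For each of the remaining five groups, the plan is to exhibit a natural faithful embedding as a subgroup of $H([0,1])$ or $H(S^1)$, and then apply Theorem~\ref{thm:taukc_min_GEN}. Specifically, $\mathop{Aut}(Q,<)$ (and its extension by order-reversal) embeds in $H([0,1])$ by extending each order automorphism continuously to the Dedekind completion, and the two cyclic versions embed analogously in $H(S^1)$. In each case $O_X$ is dense (in fact equal to $X$ minus finitely many points) and $C_X$ is correspondingly finite, so Theorem~\ref{thm:taukc_min_GEN} applies provided one verifies the ``locally non-trivial'' hypothesis: for every open arc $U$ the group contains some non-trivial element supported in $U$. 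This is the main technical point, and it is straightforward in each of the five cases because the groups in question are rich enough to produce compactly supported order-preserving (or cyclic-order preserving) elements on any rational subinterval; indeed the density of $Q$ (respectively $S^1_Q$) and the group's transitivity on triples of points in the appropriate order type guarantee such elements exist. Applying Theorem~\ref{thm:taukc_min_GEN} then yields that $\tau_{k|C_X}$ restricted to $G$ is the minimum Hausdorff group topology $\tau_m$ on $G$.

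To prove the second assertion, that $\tau_m \subsetneq \tau_p$ in each non-$S(\N)$ case, I would exhibit a concrete $\tau_p$-open set that fails to be $\tau_m$-open. The basic $\tau_m$-neighborhoods of the identity are of the form $B_\epsilon=\{g\in G:d(g(x),x)<\epsilon,\ d(g^{-1}(x),x)<\epsilon \text{ for all }x\in X\setminus C_\epsilon\}$. Fix a point $x_0$ in the dense countable set (say $x_0=1/2\in Q$); the stabilizer $\mathrm{Stab}(x_0)=\{g\in G:g(x_0)=x_0\}$ is a basic $\tau_p$-neighborhood of $1$. To see it is not a $\tau_m$-neighborhood, I would show that every $B_\epsilon$ contains an element moving $x_0$: using the local non-triviality hypothesis applied to the arc $(x_0-\epsilon/2,x_0+\epsilon/2)$, one produces a non-trivial element supported there, which necessarily lies in $B_\epsilon$ but moves some nearby point; iterating or replacing $x_0$ with such a moved point yields an element of $B_\epsilon$ that does not fix $x_0$. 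Hence $B_\epsilon\not\subseteq\mathrm{Stab}(x_0)$, so $\tau_m\subsetneq\tau_p$, and consequently $\tau_p$ is not minimal on $G$.

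The main obstacle I expect is the careful verification of the locally non-trivial hypothesis simultaneously for all five non-trivial Cameron classes --- especially the cyclic-order cases, where one must be attentive to how order-reversing bijections of $S^1_Q$ extend to $H(S^1)$ and still produce elements supported in a prescribed small arc. Once that hypothesis is in hand for each case, the rest of the argument is formal application of Theorem~\ref{thm:taukc_min_GEN} together with the elementary observation that stabilizers of points cannot be neighborhoods of the identity in the uniform-type topology $\tau_m$.
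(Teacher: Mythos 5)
Your proposal is correct and follows essentially the same route as the paper: Cameron's classification \cite{Cam1} of the highly homogeneous groups, Gaughan's theorem \cite{Gau} for $S(\N)$, embedding each remaining (non-transitive) group into $H([0,1])$ or $H(S^1)$ via the completion and applying Theorem~\ref{thm:taukc_min_GEN}, then comparing the resulting minimum topology with point stabilizers to conclude $\tau_m \subsetneq \tau_p$. The only rough spot is your argument that every $B_\epsilon$ contains an element moving the fixed point $x_0$ (local non-triviality only yields a non-trivial element supported near $x_0$, which may fix $x_0$, and your ``replace $x_0$ by a moved point'' makes the witnessing point depend on $\epsilon$); the cosmetic fix is that in each of these groups one can directly construct, for every $\epsilon$, an automorphism supported in a small rational interval about $x_0$ which moves $x_0$, exactly the fact the paper asserts without proof.
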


However not all oligomorphic groups have a minimum Hausdorff group topology. Indeed let $B$ denote the (unique) atomless countable Boolean algebra. Then $\mathop{Aut}(B)$ is oligomorphic and is well known to be isomorphic to the homeomorphism group of the Cantor set, $H(C)$. Since we have proved, Corollary~\ref{nomin_MCH}, that this group does not have a minimum Hausdorff group topology, we have:
\begin{eg} The oligomorphic group
$\mathop{Aut}(B)$ has no minimum Hausdorff group  topology.
\end{eg}

\end{document}